\documentclass[a4paper,10pt,twoside]{article}

\usepackage[margin=1.5in, marginparwidth=1.2in]{geometry}
\usepackage[numbers]{natbib}
\usepackage[T1]{fontenc}
\usepackage[utf8]{inputenc}

\usepackage{amsmath} 
\usepackage{mathrsfs}
\usepackage{amsthm} 
\usepackage{amsfonts}
\usepackage{amssymb, latexsym}
\usepackage{mhequ}
\usepackage{verbatim}

\usepackage{kerkis}

\usepackage{graphicx}
\usepackage{color}
\usepackage{xcolor}
\usepackage{tikz}
\usetikzlibrary{calc,intersections,through,backgrounds}
\usetikzlibrary{decorations.pathreplacing}
\usetikzlibrary{shapes}

\usepackage{float} 
\usepackage{enumerate}

\usepackage[pdfauthor={}, pdftitle={}, pdftex]{hyperref}
\hypersetup{
	colorlinks=false,
	pdfborder={0 0 0}
}

\usepackage{tocloft}
\usepackage{fancyhdr}

\usepackage[]{appendix}

\usepackage{hyperref}
\hypersetup{
	colorlinks=true,    
	linkcolor=blue,     
	citecolor=red,    
	filecolor=blue,   
	urlcolor=cyan,
}

\allowdisplaybreaks

\theoremstyle{plain}
\newtheorem{thm}{Theorem}[section]

\newtheorem{lem}[thm]{Lemma}

\numberwithin{equation}{section}
\theoremstyle{remark}
\newtheorem{remark}[thm]{Remark}

\theoremstyle{definition}

\newcommand{\N}{\mathbb{N}}
\newcommand{\R}{\mathbb{R}}
\newcommand{\Z}{\mathbb{Z}}

\renewcommand{\max}{\operatorname{max}}

\newcommand{\defeq}{\mathrel{:=}}
\renewcommand{\d}{\text{d}}
\newcommand{\norm}[1]{\left\lVert #1 \right\rVert} 

\title
{Global well-posedness of the Boltzmann equation via bilinear estimates}
\author{Anne Niesdroy}
\date{}

\fancypagestyle{plain}{

	\fancyhf{}
	\fancyfoot[C]{\thepage}
}

\renewenvironment{abstract}
{
	\begin{minipage}{.9\textwidth}\small\textbf{Abstract}.\noindent
	}
	{
	\end{minipage}
}

\newenvironment{keywords}
{
	\begin{minipage}{.9\textwidth}\small\textbf{Keywords}:\noindent
	}
	{
	\end{minipage}
}

\newenvironment{msc}
{
	\begin{minipage}{.9\textwidth}\small\textbf{MSC 2020}:\noindent
	}
	{
	\end{minipage}
}

\begin{document}
	\maketitle	
	
	\begin{center}
		\begin{abstract}
			We prove global well-posedness of the hard-sphere Boltzmann equation in $\mathbb{R}^d$ for small initial data in the critical Besov space $B_{1,1}^{d-1}L_v^1$ and in the critical Sobolev space $W^{d-1,1}L_v^1$. The proof is based on new bilinear estimates for the nonlinear collision operator, which rely on transversality considerations. 
	\end{abstract}	
	\end{center}
	
	\bigskip
	
	\begin{keywords}
		Bilinear Estimates, Boltzmann equation.
	\end{keywords}
	
	\smallskip
	
	\begin{msc} 35Q20 \end{msc}

	\tableofcontents
	\section{Introduction}
We consider the Cauchy-problem for the Boltzmann equation 
\begin{align}
	\partial_t f + v \cdot \nabla_x f &= Q(f,f)\tag{IVP}\label{IVP},\\
	f(0,x,v) &= f^0(x,v)\notag,
\end{align}
within the domain $(0, \infty)\times \R^d \times \R^d$. The collision operator $Q(f,f)$ is given by
\begin{align*}
	Q(f,f)(v) = \int_{\R^d}\int_{\omega \in S_+^{d-1}} \left(f(t,x,v^{\ast})f(t,x,u^{\ast})-f(v)f(u)\right)B(v-u,\omega)\d \omega \d u, 
\end{align*}
where the post-collision velocities are defined by
\begin{align*}
	v^{\ast} = v - [\omega(v-u)]\omega, \qquad u^{\ast} = u + [\omega(v-u)]\omega,
\end{align*}
 and $\omega \in S_+^{d-1}$.
 
In this work, we consider the cutoff hard-sphere model, where the collision kernel $B$ factorizes as
\begin{align*}
	B(v-u, \omega) = \left \vert v- u \right \vert^{\gamma} b(\cos (\theta)), 
\end{align*}
with $\gamma = 1 $, $\cos (\theta) = \left \langle \omega, \frac{(v-u)}{\left \vert v-u \right \vert }\right \rangle$ and $0 \leq b(\cos (\theta)) \leq C \left \vert \cos (\theta)\right \vert$.
Under the cutoff assumption, the operator $Q$ can be decomposed into a gain term $Q^+$ and a loss term $Q^-$. 
 
The nonlinear collision term contains the product of the distribution function at
one space point and the same function at the same space point but for a different velocity argument. 
This structure typically forces well-posedness results to be established in spaces embedded into $L_x^\infty$. 
Our objective is to overcome this restriction and prove global well-posedness in critical spaces without relying on spatial $L^\infty$-control.
 
Our approach is inspired by transversality arguments from dispersive PDE theory. This includes the use of bilinear estimates for nonlinear dispersive equations and the transference of estimates for free solutions to suitable adapted function spaces  (see e.g. \cite{KM1993,B1988,KF2000,T2007,CH2018}).\\ 
First, we establish bilinear estimates for free transport solutions. 
Then, we introduce new solution spaces that capture suitable superpositions of free flows. 
This structure allows us to transfer bilinear bounds from free solutions to nonlinear solutions. 
A contraction argument in these spaces yields global well-posedness for small initial data.
 
Our core idea is to exploit transversality and the geometric structure of the free transport flow. We work at minimal spatial integrability, namely in $L^1$, and obtain the following small-data result at critical regularity.

\begin{thm}[Global well-posedness for small initial data]\label{thm_glob_well_posedness}
 Let $s \geq d-1$. 
 \begin{enumerate}
 	\item There exists $\varepsilon_0 >0$ such that if the initial data $f^0$ is nonnegative and $\norm{f^0}_{B_{1,1}^sL_v^1} < \varepsilon_0$, then there exists a unique nonnegative global mild solution $f$ of \ref{IVP} satisfying $Q(f,f) \in L_{t}^1B_{1,1}^sL_v^1$.
 	\item If in addition $s \in \N$, there exists $\varepsilon_0 >0$ such that if the initial data $f^0$ is nonnegative and $\norm{f^0}_{W^{s,1}L_v^1} < \varepsilon_0$, then there exists a unique nonnegative global mild solution $f$ of \ref{IVP} satisfying $Q(f,f) \in L_{t}^1W^{s,1}L_v^1$.
 \end{enumerate}
\end{thm}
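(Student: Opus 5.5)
We will prove Theorem~\ref{thm_glob_well_posedness} by a contraction argument for the mild (Duhamel) formulation
\begin{align*}
	f(t) = S(t)f^0 + \int_0^t S(t-\tau) Q(f,f)(\tau)\,\d\tau, \qquad S(t)g(x,v) = g(x-tv,v),
\end{align*}
carried out in a solution space $X$ built from superpositions of free transport flows. Concretely, we let $X$ consist of functions of the form $f(t)=\int S(t-\tau)g_\tau\,\mu(\d\tau)$, and more generally of $S(t)f^0+\int_0^t S(t-\tau)F(\tau)\,\d\tau$. The norm is the atomic-type norm $\norm{f}_X = \inf\bigl(\norm{f^0}_{B^s_{1,1}L^1_v} + \norm{F}_{L^1_t B^s_{1,1}L^1_v}\bigr)$, with the analogous definition using $W^{s,1}L^1_v$ for part 2. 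By construction the Duhamel operator maps $L^1_tB^s_{1,1}L^1_v$ into $X$ boundedly, and the free flow satisfies $\norm{S(\cdot)f^0}_X\le\norm{f^0}$. It therefore suffices to prove the bilinear estimate
\begin{align*}
	\norm{Q^\pm(f,g)}_{L^1_tB^s_{1,1}L^1_v} \lesssim \norm{f}_X\norm{g}_X, \qquad s\ge d-1.
\end{align*}

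\textbf{Step 1: bilinear estimate for free solutions.} For $f=S(t)f_0$ and $g=S(t)g_0$ we bound $\int_0^\infty\!\iint |v-u|\,|f_0(x-tv,v)|\,|g_0(x-tu,u)|\,\d u\,\d v\,\d x\,\d t$ for the loss term. Substituting $y=x-tv$ and $z=y+t(v-u)$ turns the $(t,\text{direction})$ integration into a $d$-dimensional change of variables with Jacobian $\sim |v-u|\,t^{d-1}\,|v-u|^{d-1}$. This is the transversality of two transport rays with distinct velocities. The weight $|v-u|$ is exactly absorbed, leaving roughly $\int |f_0(y,v)|\,|g_0(z,u)|\,|y-z|^{1-d}$, i.e.\ a Riesz potential of order $1$ in $x$. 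This is controlled by $\norm{f_0}_{L^1}\,\norm{g_0}_{\dot W^{d-1,1}}$-type norms, which explains criticality at $s=d-1$. The gain term $Q^+$ is handled the same way. We use the change of variables $(v,u,\omega)\mapsto(v^*,u^*,\omega)$, which has unit Jacobian, together with $|v^*-u^*|=|v-u|$ and $b\lesssim|\cos\theta|$. The angular integral then produces the same transversal structure. Derivatives are distributed by Leibniz/paraproduct decomposition in Littlewood–Paley blocks. In each block the high-frequency factor receives the regularity, and the Riesz kernel at frequency $2^k$ yields the factor $2^{-k(d-1)}$, which is summable for $B^{s}_{1,1}$ with $s\ge d-1$. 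Extra regularity is distributed freely since $B^s_{1,1}\subset B^{d-1}_{1,1}$ and the space is an algebra-like setting for $s\geq d-1$.

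\textbf{Step 2: transference.} Elements of $X$ are superpositions of free solutions started at times $\tau$. Since Step 1 is time-translation invariant (integrating over $t\ge\max(\tau_1,\tau_2)$ only helps, the integrand being nonnegative), Minkowski's inequality transfers the free bilinear bound to $X$. The estimate is applied with absolute values, so positivity causes no loss. \textbf{Step 3: contraction.} The map $\Phi(f)=S(t)f^0+\int_0^tS(t-\tau)Q(f,f)\,\d\tau$ satisfies $\norm{\Phi f}_X\le \norm{f^0}+C\norm{f}_X^2$ and $\norm{\Phi f-\Phi g}_X\le C(\norm f_X+\norm g_X)\norm{f-g}_X$. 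Hence it contracts on a small ball when $\varepsilon_0<1/(4C)$, giving existence, uniqueness, and $Q(f,f)\in L^1_tB^s_{1,1}L^1_v$. Nonnegativity follows from the standard Kaniel–Shinbrot or exponential-integrating-factor iteration $\partial_tf+v\cdot\nabla f+fR(g)=Q^+(g,g)$, which preserves positivity and converges in the same norms by the same estimates. Part 2 follows identically, with Leibniz in integer derivatives replacing the dyadic decomposition.

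The main obstacle will be Step 1 for $Q^+$ at endpoint regularity in $L^1$. The Riesz potential of order $1$ is not bounded on $L^1$, so one has to exploit that derivatives fall on one factor and use the Besov $\ell^1$ summation, or the $W^{d-1,1}$ structure via Sobolev-type trace bounds along rays, instead of Hardy–Littlewood–Sobolev. The first approach to try is to prove the free estimate with one factor in $L^1_{x,v}$ and the other in $\dot B^{d-1}_{1,1}L^1_v$ directly, by dyadically decomposing $|y-z|$.
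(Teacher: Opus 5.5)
\textbf{The gap is in Step 1, which carries the whole argument.} Your $d$-dimensional substitution does not decouple the velocities. Write $u=v-r\sigma$ and $z=y+tr\sigma$. Then $|v-u|\,\d u\,\d t=r^{d-1}|z-y|^{1-d}\,\d r\,\d z$ with $\sigma=(z-y)/|z-y|$, so the loss integral is exactly
\begin{align*}
\int\!\!\int\!\!\int |f_0(y,v)|\,|z-y|^{1-d}\int_0^\infty r^{d-1}\Bigl|g_0\Bigl(z,\,v-r\tfrac{z-y}{|z-y|}\Bigr)\Bigr|\,\d r\,\d z\,\d y\,\d v .
\end{align*}
So $g_0$ is sampled on a velocity ray whose direction is dictated by $z-y$. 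A dimension count confirms the mismatch: the original integral is $(3d+1)$-dimensional, whereas your form $\int|f_0(y,v)||g_0(z,u)||y-z|^{1-d}$ is $4d$-dimensional.

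Such ray integrals in $v$ are not controlled by $L^1_v$. Concretely, suppose $f_0(y,\cdot)$ and $g_0(z,\cdot)$ concentrate at $v_0,u_0$. Then the integral tends to $|v_0-u_0|\int F(y)\int_0^\infty G(y+t(v_0-u_0))\,\d t\,\d y$. This is an X-ray transform in one fixed direction, not an average over directions. Now take $G$ to be the indicator of a thin tube of length $L$ and width $\delta$ aligned with $v_0-u_0$. This quantity then exceeds $\int\!\!\int F(y)G(z)|y-z|^{1-d}\,\d y\,\d z$ by a factor that is unbounded as $L/\delta\to\infty$. Hence the Riesz-potential bound you plan to prove (by dyadic decomposition in $|y-z|$) is strictly weaker than what is needed. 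The obstacle you single out, unboundedness of the Riesz potential on $L^1$, is not the real one.

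\textbf{The missing idea} is to keep $(v,u)$ fixed and integrate out only $t$, so that what must be bounded is a line integral, uniformly in the direction of $v-u$. The paper does this as follows:
\begin{itemize}
\item It chooses the coordinate $x^{(1)}$ with $|v^{(1)}-u^{(1)}|=\max_i|v^{(i)}-u^{(i)}|$ and substitutes $\tau=x^{(1)}-t(v^{(1)}-u^{(1)})$.
\item The Jacobian $|v^{(1)}-u^{(1)}|^{-1}$ absorbs $|v-u|$ up to a dimensional constant.
\item The $\tau$-integral of the shifted $f_0$ is bounded by $\norm{f_0(\cdot,v)}_{L^1_{x^{(1)}}L^\infty_{x'}}$.
\item The remaining $x$-integral against $|g_0(x,u)|$ gives $\norm{g_0(\cdot,u)}_{L^1_x}$.
\end{itemize}
Criticality at $s=d-1$ then comes from $B^{d-1}_{1,1}(\R^{d-1})\hookrightarrow L^\infty$, resp.\ $W^{d-1,1}(\R^{d-1})\hookrightarrow L^\infty$, in the $d-1$ transverse variables, not from Hardy--Littlewood--Sobolev. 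At the block level this is the Bernstein bound $\norm{P_k h}_{L^1_{x^{(1)}}L^\infty_{x'}}\lesssim 2^{k(d-1)}\norm{P_k h}_{L^1}$. In the paraproduct terms, this mixed norm should be placed on the lower-frequency factor. That is possible because the roles of $f_0$ and $g_0$ in the substitution are interchangeable.

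Your remark about trace bounds along rays points in this direction but is not developed. Your transference by Minkowski over the Duhamel superposition and your contraction step are sound and parallel the paper.
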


\begin{remark}
Writing $x = (\eta, y)$, the proof relies on the embeddings  
\begin{align*}
	B_{1,1}^{d-1}\left(\R^{d-1}\right) \hookrightarrow L_y^{\infty}\left(\R^{d-1}\right), \qquad W^{d-1,1}\left(\R^{d-1}\right) \hookrightarrow L_y^{\infty}\left(\R^{d-1}\right),
\end{align*}	
which yield 
\begin{align*}
	B_{1,1}^{d-1}\left(\R^{d}\right) \hookrightarrow L_{\eta}^1L_y^{\infty}, \qquad W^{d-1,1}\left(\R^{d}\right) \hookrightarrow L_{\eta}^1L_y^{\infty}.
\end{align*}
Alternatively, one may work in the anisotropic Chemin-Lerner space $L_{\eta}^1B_y^{s}$ or in $L_{\eta}^1W_y^{s,1}$. In contrast to the recent preprint of Hu, Liu, Peng and Zhou \cite{HLZ2026}, our approach avoids the use of additional velocity weights and therefore yields a genuinely critical result.
\end{remark}

\begin{remark}
Using similar bilinear estimates, one can also establish local well-posedness for more general hard potentials $\gamma \ge 0$. In particular, let 
$$a > \frac{d+1}{d+\gamma},  \qquad s \ge \frac{d-1}{a}, \qquad \beta > \gamma + d - \frac{d+1}{a}$$ and define $\mathcal{X}_a^{s} = B_{a,1}^{s}L_v^a$  or $\mathcal{X}_a^{s} = W^{s,a}L_v^a.$
There exists $\varepsilon_0 >0$ such that if $\norm{\left \langle v \right \rangle ^{\beta}f^0}_{\mathcal{X}_a^s} < \varepsilon_0$, then there exists a unique nonnegative local mild solution $f$ of \ref{IVP} with $Q(f,f) \in L_t^1([0,T],\mathcal{X}_a^s)$. A detailed analysis of this extension will be presented elsewhere.
\end{remark}

\subsection{Comments on the literature} 
The Cauchy problem for the Boltzmann equation has been a central topic in kinetic theory for several decades. The main objective is to establish global well-posedness under the weakest possible assumptions on the initial data. Various settings have been investigated, including cutoff and non-cutoff assumptions on the collision kernel, as well as hard and soft potentials and Maxwellian-type interactions.

In the following, we provide a brief and non-exhaustive overview of the literature concerning existence results for the Cauchy problem.

General background on the Boltzmann equation can be found in \cite{C1988} and \cite{V2002}.

A major breakthrough was achieved by Di Perna and Lions \cite{DL1989b}, who treated arbitrarily large initial data. They introduced the concept of renormalized solutions and proved their global-in-time existence. However, uniqueness of renormalized solutions remains an open problem.

To address uniqueness, several works have focused on special classes of initial data. One important class consists of perturbations of the Maxwellian equilibrium, where the initial data are sufficiently close to a Maxwellian distribution (cf. \cite{DLX2016} and the references therein). 

In this paper, we focus instead on solutions corresponding to small initial data, often referred to as the near-vacuum regime. This line of research goes back to Illner and Shinbrot \cite{IS1984}, who constructed weighted $L^{\infty}-$solutions for the hard-sphere model using a contraction mapping argument. Later, Arsénio \cite{A2011} established global existence of mild solutions for small initial data in $L^D-$spaces. Later, Chen, Denlinger and Pavlović \cite{CDP2021} developed a bilinear spacetime estimate framework to prove small-data global well-posedness for the Boltzmann equation with constant collision kernels.

For cutoff kernels with soft potentials, He and Jiang \cite{HJ2017,HJ2023} studied well-posedness and  scattering of the Cauchy problem for small initial data. Further $L^p$-estimates for the gain term of the collision operator and their applications were obtained in \cite{HJKL2024}.

In the one-dimensional case $d=1$, Ha \cite{H2003}, Benedetto, Caglioti and Pulvirenti \cite{BCP1997} and Benedetto and Pulvirenti \cite{BP2001} proved existence and uniqueness of mild solutions for nonnegative, bounded initial data with compact support in the velocity variable.

More recently, Chen, Shen and Zhang \cite{CSZ2023,CSZ2024} investigated both, well-posedness and ill-posedness in critical Sobolev spaces using a dispersive approach. They introduced scaling-critical $L^2$-based superposition spaces. However, their method is intrinsically limited to the $L^2$-framework. Very recently, Hu, Liu, Peng and Zhou \cite{HLZ2026} studied weighted Besov spaces and obtained results comparable to ours for both, hard and soft potentials. In contrast to our approach, their method requires additional weights to control the transport structure. By exploiting the superposition structure of free solutions, we avoid this difficulty and directly obtain scale-invariant critical regularity.

A conceptually related idea was already employed by Cercignani in the context of the Enskog equation \cite{CC1988}, where the decoupling of the spatial variables simplifies the analysis.

\subsection{Structure of the paper}
We first introduce the functional framework and show some preliminary boundedness results for Littlewood-Paley projections. Next, we establish the required bilinear estimates in these spaces. Finally, we prove global well-posedness for small initial data.

 \section{Notation and preliminaries}
For two quantities $A$ and $B$, $A \lesssim B$ means that there is a generic constant $C> 0$ such that $A \leq CB$. 
 \subsection{Fourier transform and Fourier multipliers}
 	For fixed $(t,v)$ we define the Fourier transform in the spatial variable $x$ of a Schwartzfunction $\varphi \in \mathcal{S}\left(\R^d\right)$ by
 	\begin{align}
 	\mathcal{F}(\varphi)(t,\xi,v):=\frac{1}{(2\pi)^\frac{d}{2}}\int_{\R^d}{e^{-\pi i x\cdot\xi}\varphi(t,x,v)dx}, \quad \xi \in \R^d.
 	\end{align}
 For a tempered distribution $f \in \mathcal{S}'\left(\R^d\right)$ the Fourier transform is defined in the usual dual sense  
 	\begin{align*}
 		\langle \mathcal{F}(f), \varphi\rangle =\langle f, \mathcal{F}(\varphi)\rangle.
 	\end{align*}

We denote its inverse by $\mathcal{F}^{-1}(f)$.  
Next, we introduce a dyadic partition of unity in $\R^d$. Let $\eta_0$ be a smooth function with values in $[0, 1]$ such that $\eta_0$ is supported in the ball with radius $\frac{8}{5}$
and satisfies $\eta_0 \equiv 1$ on $B_{4/5}(0)$. For $k \in \Z$ define
\begin{align*}
	\chi_k(\xi) := \eta_0(2^{-k}\xi) - \eta_0(2^{-k+1}\xi),
	\qquad
	\chi_{\le k}(\xi) := \eta_0(2^{-k}\xi).
\end{align*}
The associated Littlewood-Paley projections are given by
\begin{align*}
	P_k f := \mathcal{F}_x^{-1} \left( \chi_k \mathcal{F}_x (f) \right),
	\qquad
	P_{\le k} f := \mathcal{F}_x^{-1} \left( \chi_{\le k} \mathcal{F}_x (f) \right).
\end{align*}

We next establish uniform $L^p$-boundedness of the Littlewood-Paley projections $P_k$ and $P_{\leq k}$.

\begin{lem}\label{lem_bound_P}
	Let  $1\leq p\leq \infty$ and $k \geq 0$. Then, there is a constant $C>0$, independent of $k$ such that for all $f \in  L^p$ it holds that
	\begin{equation}
		\norm{P_k f}_{L_x^p}\leq  C\norm{f}_{L_x^p},\qquad  \norm{P_{\leq k}f}_{L_x^p} \leq C\norm{f}_{L_x^p}.
	\end{equation}
\end{lem}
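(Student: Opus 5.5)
Both projections are Fourier multipliers with symbols obtained from a fixed Schwartz function by dilation. The plan is therefore to write each projection as convolution with an $L^1$ kernel whose $L^1$ norm does not depend on $k$, and then apply Young's inequality, which covers all $1\le p\le\infty$ at once.

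First, for $P_{\le k}$, set $\check\eta_0 := \mathcal{F}^{-1}(\eta_0)$. Since $\eta_0\in C_c^\infty(\R^d)$, we have $\check\eta_0\in\mathcal{S}(\R^d)\subset L^1(\R^d)$. By the scaling property of the Fourier transform, the symbol $\chi_{\le k}(\xi)=\eta_0(2^{-k}\xi)$ corresponds to the kernel
\begin{align*}
	K_{\le k}(x) = c_d\, 2^{kd}\,\check\eta_0(2^k x),
\end{align*}
where $c_d$ is a fixed constant coming from the normalization of $\mathcal{F}$ chosen in the paper, including the $(2\pi)^{-d/2}$ factor and the $\pi$ in the exponent. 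Thus $P_{\le k}f = K_{\le k}\ast_x f$. The change of variables $y=2^kx$ gives
\begin{align*}
	\norm{K_{\le k}}_{L^1} = |c_d|\,\norm{\check\eta_0}_{L^1},
\end{align*}
which is independent of $k$. Young's inequality $\norm{K\ast f}_{L^p}\le\norm{K}_{L^1}\norm{f}_{L^p}$ then gives the second bound for every $1\le p\le\infty$.

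For $P_k$, write $\chi_k = \chi_{\le k}-\chi_{\le k-1}$, which follows from $\eta_0(2^{-k+1}\xi)=\chi_{\le k-1}(\xi)$. Hence $P_k = P_{\le k}-P_{\le k-1}$, and the triangle inequality yields $\norm{P_kf}_{L^p}\le 2|c_d|\,\norm{\check\eta_0}_{L^1}\norm{f}_{L^p}$. The restriction $k\ge0$ plays no role; the argument works for all $k\in\Z$.

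The only step that needs care is that the convolution representation holds for general $f\in L^p$, and not only for Schwartz functions. For $1\le p<\infty$, I will prove the identity on $\mathcal{S}$ and extend it by density, using the bound just obtained. For $p=\infty$, I will argue directly in $\mathcal{S}'$: for $f\in L^\infty$ and a Schwartz kernel $K$, the convolution $K\ast f$ is a well-defined tempered distribution whose Fourier transform is (a constant multiple of) $\hat K\hat f$. This is consistent with the paper's definition of the multiplier via $\mathcal{F}_x^{-1}(\chi\,\mathcal{F}_x f)$.

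Finally, since the functions depend on $(t,v)$ as parameters, these bounds hold pointwise in $(t,v)$. They therefore pass directly to the mixed norms used later in the paper.
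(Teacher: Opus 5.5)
Your proof is correct and follows the paper's argument. Both write the projection as convolution with a dilated kernel $2^{kd}\,\mathcal{F}^{-1}(\cdot)(2^k x)$, whose $L^1$ norm does not depend on $k$, and then apply Young's inequality. The differences are cosmetic. For $P_k$ the paper dilates $\chi_0$ directly rather than using $P_k=P_{\le k}-P_{\le k-1}$. The paper also writes $P_kf=\mathcal{F}^{-1}(\chi_k)\ast f$ without the normalization constant you track, and it does not discuss the passage from Schwartz functions to general $f\in L^p$, which your proposal handles.
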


\begin{proof}
	By definition $P_k f = \mathcal{F}^{-1}\left(\chi_k\right)\ast f$. Since $\chi_k(\xi)=\chi_0\left(2^{-k}\xi\right)$, we have the scaling relation $\mathcal{F}^{-1}\left(\chi_k\right)(x) = 2^{kd} \mathcal{F}^{-1}(\chi_0)\left(2^k x\right)$. Hence, $\norm{\mathcal{F}^{-1}(\chi_k)}_{L^1} = \norm{\mathcal{F}^{-1}(\chi_0)}_{L^1}$, independently of $k$. Young's inequality therefore yields 
	\begin{align*}
		\norm{P_k f}_{L_{x}^p}  \leq \norm{\mathcal{F}^{-1}\left(\chi_k\right)}_{L_{x}^1}\norm{f}_{L_{x}^p}\leq C \norm{f}_{L_x^p}.
	\end{align*}
	The estimate for $P_{\le k}$ follows in the same way.
\end{proof}

 \subsection{Function spaces and embeddings}
Let $1 \leq p \leq \infty$, and let $k \geq 0$ be a natural number. A function $f$ is said to lie in the classical Sobolev space $W^{k,p}\left({\R}^d\right)$ if its weak derivatives $D^{\alpha} f$ exist and lie in $L^p\left({\R}^d\right)$ for all multiindizes $\alpha$ with $\vert \alpha \vert \leq k$. If $f$ lies in $W^{k,p}\left({\R}^d\right)$, we define the $W^{k,p}$ norm of $f$ by 
\begin{align*}
	\norm{f}_{W^{k,p}\left({\R}^d\right)} := \sum_{\vert \alpha \vert \leq k} \norm{D^{\alpha} f}_{L^p\left({\R}^d\right)}.
\end{align*}
  The inhomogeneous Besov space is defined by 
  		\begin{align*}
 			B^s_{p,q}\left(\R^d\right):=\{f \in \mathcal{S}'\left(\R^d\right): \|f\|_{B^s_{p,q}}< \infty\},
 		\end{align*}
where 
 		\begin{align*}
 			\|f\|_{B^s_{p,q}} :=  \begin{cases}
 				\|P_{\leq 0}(f)\|_{L^p}+ \left(\sum_{j=1}^{\infty}\left(2^{js}\|P_j(f)\|_{L^p}\right)^q \right)^{\frac{1}{q}}&, \quad q<\infty,\\
 				\|P_{\leq 0}(f)\|_{L^p}+ \sup_{j\geq 1}2^{js}\|P_j(f)\|_{L^p}&, \quad q=\infty.
 			\end{cases}
 		\end{align*}
Let $s \geq 0$ and $\mathcal{X}^s=B_{1,1}^sL_v^1$ or, if $s \in \N$, $\mathcal{X}^s \in \{B^s_{1,1}L_v^1, W^{ s  ,1}L_v^1\}$. We define the solution space  $\mathcal{N}^s$ by
 \begin{align*}
	 	\mathcal{N}^s \defeq
	 	\Big\{f \in \mathcal{S}'\left(\R \times \R^d \times \R^d \right) : 	Af \in L^1\left(\R,\mathcal{X}^s \right),
	 	\;
	 	f(0,\cdot,\cdot) \in \mathcal{X}^s
	 	\Big\},
	 \end{align*}
 where $A := \partial_t + v \cdot \nabla_x$ denotes the transport operator.
 The norm is given by
 \begin{align*}
	 	\norm{f}_{\mathcal{N}^s} 	\defeq 	\norm{f(0)}_{\mathcal{X}^s}	+ 	\norm{Af}_{L^1_{t}\mathcal{X}^s}.
	 \end{align*}
 
 \begin{lem}
 	The solution space $\mathcal{N}^s$ is a Banach space.
 \end{lem}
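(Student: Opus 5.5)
The plan is to identify $\mathcal{N}^s$ isometrically with the product Banach space $\mathcal{X}^s \times L^1(\R,\mathcal{X}^s)$ through Duhamel's formula. Completeness then transfers from that product space.

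First I check that $\norm{\cdot}_{\mathcal{N}^s}$ is a norm and not just a seminorm. Suppose $\norm{f}_{\mathcal{N}^s}=0$. Then $Af=0$ in $\mathcal{S}'$ and $f(0)=0$. Let $S(t)g(x,v)=g(x-tv,v)$ denote the free transport flow. Conjugating by this flow, set $g(t,x,v)=f(t,x+tv,v)$; this is a well-defined operation on $\mathcal{S}'(\R\times\R^d\times\R^d)$, since the change of variables is a linear symplectic-type map preserving $\mathcal{S}$. It satisfies $\partial_t g = (Af)(t,x+tv,v)$. Hence $\partial_t g=0$, so $g$ is independent of $t$. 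Then $g=g(0)=f(0)=0$, using that $f(0)$ makes sense because $f$ is continuous in $t$ with values in distributions (from $\partial_t g\in L^1_t$). So $f=0$.

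Second, for completeness, take a Cauchy sequence $(f_n)$ in $\mathcal{N}^s$. Then $f_n(0)\to f^0$ in $\mathcal{X}^s$ and $Af_n\to F$ in $L^1(\R,\mathcal{X}^s)$, because both $\mathcal{X}^s$ and the Bochner space $L^1(\R,\mathcal{X}^s)$ are complete. Completeness of $B^s_{1,1}L^1_v$ and $W^{s,1}L^1_v$ is standard. Define the candidate limit by Duhamel:
\begin{align*}
f(t) := S(t)f^0 + \int_0^t S(t-\tau)F(\tau)\,\d\tau .
\end{align*}
The key fact is that $S(t)$ is an isometry on $\mathcal{X}^s$. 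It commutes with $\nabla_x$ and with the $x$-Fourier multipliers $P_k$ for each fixed $v$. Moreover, $\norm{S(t)h}_{L^1_xL^1_v}=\norm{h}_{L^1_xL^1_v}$ by the change of variables $x\mapsto x-tv$. Because $L^1_v$ is taken inside, I apply this after Fubini: $\norm{h}_{L^1_x L^1_v} = \norm{h}_{L^1_{x,v}}$.

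Consequently $f\in C(\R,\mathcal{X}^s)\subset\mathcal{S}'$, with $Af=F$ in the distributional sense and $f(0)=f^0$. The same Duhamel representation holds for each $f_n$, by the uniqueness argument of the first step applied to $f_n$ minus its Duhamel expression. This gives
\begin{align*}
\norm{f_n-f}_{\mathcal{N}^s}=\norm{f_n(0)-f^0}_{\mathcal{X}^s}+\norm{Af_n-F}_{L^1_t\mathcal{X}^s}\to0 .
\end{align*}

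The main obstacle is the distributional bookkeeping. One must verify that $Af=F$ holds in $\mathcal{S}'(\R\times\R^d\times\R^d)$, and that $f(0,\cdot,\cdot)$ is meaningful for a general element of $\mathcal{N}^s$. I would handle both by working with the conjugated function $g$. It satisfies $\partial_t g=\tilde F\in L^1_t\mathcal{X}^s$, so $g(t)=g(0)+\int_0^t\tilde F$ by the fundamental theorem of calculus for Bochner integrals. Pairing against test functions then gives the identity in $\mathcal{S}'$.
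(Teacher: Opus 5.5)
Your proof is correct and follows essentially the same route as the paper: definiteness comes from uniqueness for $Af=0$ with $f(0)=0$, and completeness comes from passing to the limits $f^0$ and $F$ in $\mathcal{X}^s$ and $L^1_t\mathcal{X}^s$ and defining the limit by Duhamel's formula. Your isometry of $S(t)$ on $\mathcal{X}^s$ and the conjugation $g(t,x,v)=f(t,x+tv,v)$ supply details the paper leaves implicit. One small correction: the shear $(t,x,v)\mapsto(t,x+tv,v)$ is not linear. It is, however, a polynomial diffeomorphism with polynomial inverse and unit Jacobian, and that property is what guarantees it preserves $\mathcal{S}$ and hence acts on $\mathcal{S}'$.
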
 
 
 \begin{proof}
 	Since $B_{1,1}^{s}$, $W^{s,1}$ and $L^1$ are Banach spaces absolute homogeneity and the triangle inequality follow immediately.
 	If $\norm{f}_{\mathcal{N}^s} = 0$, then  $\norm{f^0}_{\mathcal{X}^s} = 0$ and $ \norm{Af}_{L_t^1\mathcal{X}^s} = 0$.  The transport equation $Af=0$ implies $f(t,x,v) = f^0(x-tv,v)= 0 $, hence $f\equiv0$.	
 	
 	To prove completeness, let $(f_k)_k$ be a Cauchy sequence in $\mathcal{N}^s$. Then, $f^0_k$ is Cauchy in $\mathcal{X}^s$ and $Af_k$ is Cauchy in $L_t^1\mathcal{X}^s$. Due to the completeness of $\mathcal{X}^s$ and  $L_t^1\mathcal{X}^s$, there exist $\varphi$ and $h$ in the respective spaces such that the sequence $(f_k^0)_k$ converges to $\varphi $ and $(Af_k)_k$ converges to $h$. Define $f(t,x,v) = \varphi (x-tv,v) + \int_0^t h(x-(t-s)v,v) \d s$. Then, $f\in\mathcal{N}^s$ and $f_k\to f$ in $\mathcal{N}^s$.
 \end{proof}

Later, we will use the following Sobolev and Besov embedding results in dimension $d-1$.

\begin{thm}[Sobolev embedding]\label{thm_sobolev_embedding}
\begin{enumerate} \item (
	\cite[2.8.1 Remark 3]{T1978}
) Let $0\leq l\leq k$. Assume either 
\begin{enumerate}
	\item $1 < p < q \leq \infty$ and $\frac{d}{p}-k < \frac{d}{q}-l$ or 
	\item $1 < p\leq q <\infty$ and $\frac{d}{p}-k \leq  \frac{d}{q}-l$.
\end{enumerate}
Then, 
$W^{k,p}\left(\R^{d}\right)
\hookrightarrow
W^{l,q}\left(\R^{d}\right)$.
\item (\cite[Theorem 4.12]{AF2003}) $W^{d,1}\left(\R^{d}\right)
\hookrightarrow C_b\left(\R^{d},\norm{\cdot}_{\infty}\right)$.
\end{enumerate}

\end{thm}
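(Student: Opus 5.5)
The statement collects two classical facts, and I would prove each directly rather than only citing \cite{T1978} and \cite{AF2003}. Part 2 is elementary and I would do it first. Part 1 reduces to the Bessel potential calculus together with the Hardy--Littlewood--Sobolev inequality.

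\emph{Part 2.} Take first $f\in C_c^\infty(\R^d)$. Integrating successively in each coordinate from $-\infty$ gives
\begin{align*}
f(x)=\int_{-\infty}^{x_1}\cdots\int_{-\infty}^{x_d}\partial_1\cdots\partial_d f(y)\,\d y,
\end{align*}
so $\norm{f}_{L^\infty}\le\norm{\partial_1\cdots\partial_d f}_{L^1}\le\norm{f}_{W^{d,1}}$. The derivative $\partial_1\cdots\partial_d$ is a mixed derivative of order exactly $d$, which is why $W^{d,1}$ suffices. Since $C_c^\infty$ is dense in $W^{d,1}$, any $f\in W^{d,1}$ is the $W^{d,1}$-limit of smooth compactly supported $f_n$. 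By the bound, $(f_n)$ is Cauchy in sup norm, so it converges uniformly to a bounded continuous function that agrees with $f$ a.e. This gives the continuous embedding into $C_b$.

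\emph{Part 1.} Since every $\partial^\alpha$ with $|\alpha|\le l$ commutes with the embedding argument, it suffices to treat $l=0$ with $k$ replaced by $m:=k-l$. Here I would use $1<p<\infty$ to identify $W^{m,p}$ with the Bessel potential space $H^{m,p}=J^{-m}L^p$, where $J^{-m}=(1-\Delta)^{-m/2}$. This identification follows from the Mikhlin multiplier theorem applied to the symbols $\xi^\alpha\langle\xi\rangle^{-m}$ and $\langle\xi\rangle^{m}(1+\sum_j|\xi_j|^m)^{-1}$; this is the only place where $p>1$ is used. Writing $f=G_m\ast g$ with $g\in L^p$, the task becomes an estimate for the Bessel kernel $G_m$. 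This kernel is positive, satisfies $G_m(x)\lesssim|x|^{m-d}$ near $0$ (with a logarithmic or bounded modification when $m\ge d$), and decays exponentially at infinity.

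In case (b), with $p\le q<\infty$ and $\frac1q\ge\frac1p-\frac md$:
\begin{enumerate}
\item In the endpoint case $\frac1q=\frac1p-\frac md$, bound $G_m$ by the Riesz kernel $|x|^{m-d}$ and apply Hardy--Littlewood--Sobolev.
\item In the non-endpoint case, $G_m\in L^r$ with $1+\frac1q=\frac1p+\frac1r$, because the local singularity is $L^r$-integrable exactly when $\frac1q>\frac1p-\frac md$. Young's inequality then finishes.
\end{enumerate}

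In case (a), with $q=\infty$ and $\frac dp<m$, Hölder's inequality gives $\norm{G_m\ast g}_\infty\le\norm{G_m}_{L^{p'}}\norm{g}_{L^p}$. The norm $\norm{G_m}_{L^{p'}}$ is finite precisely when $(d-m)p'<d$, which is the hypothesis $\frac dp<m$. For $q<\infty$ in case (a), the condition is covered by (b).

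I expect the main obstacle to be the identification $W^{m,p}=H^{m,p}$, that is, the multiplier estimates, together with the endpoint Hardy--Littlewood--Sobolev step. Both are standard, but nontrivial, singular-integral results. I would quote them, from \cite{T1978} and Stein's theory respectively, rather than reprove them. Everything else is Young's or Hölder's inequality combined with explicit integrability of $G_m$.
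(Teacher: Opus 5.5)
Your argument is correct, and it takes a genuinely different route: the paper gives no proof here. It only quotes Triebel, who works in the Bessel-potential/Besov framework where $W^{k,p}=H^{k,p}$ for $1<p<\infty$, and Adams--Fournier, who prove Sobolev embedding on domains with the cone condition. You instead give a self-contained proof on $\R^d$.

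Your Part 2, the fundamental theorem of calculus for the single mixed derivative $\partial_1\cdots\partial_d f$ followed by density, is the most elementary proof available. It also shows exactly why $d$ derivatives in $L^1$ suffice.

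In Part 1, the reduction to $l=0$ is valid: the hypothesis for $(k,l)$ is equivalent, with the same strictness, to the hypothesis for $(k-l,0)$. Your kernel bookkeeping is also right:
\begin{itemize}
\item at the endpoint of (b), $q<\infty$ forces $0<m<d$, so $G_m\lesssim|x|^{m-d}$ globally and Hardy--Littlewood--Sobolev applies;
\item off the endpoint, $1/r=1-(1/p-1/q)\in(0,1]$ and $G_m\in L^r$;
\item in (a) with $q=\infty$, $G_m\in L^{p'}$ if and only if $d/p<m$.
\end{itemize}

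Two small corrections. First, $p>1$ is used not only in the multiplier step: Hardy--Littlewood--Sobolev needs it, and for $m=d$ so does $G_d\in L^{p'}$, since that requires $p'<\infty$. Second, for odd $m$ your symbol $\langle\xi\rangle^m(1+\sum_j|\xi_j|^m)^{-1}$ is not smooth across the hyperplanes $\{\xi_j=0\}$, so Mikhlin's theorem does not apply verbatim. You can use the Marcinkiewicz multiplier theorem instead. More simply, expand $(1+|\xi|^2)^m$ to get the identity $\langle\xi\rangle^m=\sum_{|\beta|\le m}c_\beta\,\xi^\beta\langle\xi\rangle^{-m}\,\xi^\beta$, which involves only smooth order-zero symbols and the derivatives $\partial^\beta f$ with $|\beta|\le m$.

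The citation buys brevity; your route buys transparency about where each hypothesis enters. In particular, it shows that the potential-theoretic approach to Part 1 is confined to $1<p<\infty$. The embeddings actually used in the proof of Lemma \ref{lem: bilinear_free}, however, start from $W^{\cdot,1}(\R^{d-1})$. Those lie outside Part 1 as stated and would need Gagliardo--Nirenberg-type arguments in the spirit of your Part 2.
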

 \begin{thm}[Besov embedding,]\label{thm_besov_embedding}\cite[Theorem 2.8.1]{T1978}
 \begin{enumerate}
 	\item Let $s,s_1 \in \R$, $s \leq s_1, 1\leq q_1 \leq q \leq \infty$ and $1 \leq p_1 \leq p \leq \infty$ with $s-\frac{d}{p} \leq s_1 -\frac{d}{p_1}$. Then, 
 	\begin{align*}
 	B^{s_1}_{p_1,q_1}\left(\R^d\right) \hookrightarrow 	B^{s}_{p,q}\left(\R^d\right).
 	\end{align*}
 	\item For all $1\leq p \leq \infty $ we have
 		\begin{align*}
 			B^{\frac{d}{p}}_{p,1}\left(\R^d\right) \hookrightarrow C_b\left(\R^{d},\norm{\cdot}_{\infty}\right).
 		\end{align*}
 \end{enumerate}
 \end{thm}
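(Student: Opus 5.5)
The plan is to deduce both parts from a single Bernstein-type inequality for frequency-localized functions, combined with the dyadic definition of the Besov norm. Let $\tilde\chi_j(\xi) := \chi_{j-1}+\chi_j+\chi_{j+1}$ for $j\ge 1$. Then $\tilde\chi_j\chi_j=\chi_j$, so $P_j f = \tilde P_j P_j f$ with $\tilde P_j f:=\mathcal{F}^{-1}(\tilde\chi_j)\ast f$. Similarly, $P_{\le 0}f = \tilde P_{\le 0}P_{\le 0} f$ with a slightly enlarged cutoff $\chi_{\le 1}$.

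\textbf{Step 1 (Bernstein).} Since $\tilde\chi_j(\xi)=\tilde\chi_0(2^{-j}\xi)$, the scaling relation used in the proof of Lemma~\ref{lem_bound_P} gives
\begin{align*}
\norm{\mathcal{F}^{-1}(\tilde\chi_j)}_{L^r} = 2^{jd(1-\frac1r)}\norm{\mathcal{F}^{-1}(\tilde\chi_0)}_{L^r}
\end{align*}
for every $1\le r\le\infty$, and the right-hand norm is finite because $\tilde\chi_0$ is a Schwartz function. Take $1\le p_1\le p\le\infty$ and choose $r$ by $1+\frac1p=\frac1r+\frac1{p_1}$. Young's inequality then yields
\begin{align*}
\norm{P_j f}_{L^p} \lesssim 2^{jd(\frac1{p_1}-\frac1p)}\norm{P_j f}_{L^{p_1}},
\end{align*}
with a constant independent of $j$. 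The same argument with $j=0$ gives the analogous bound for $P_{\le 0}f$.

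\textbf{Step 2 (part 1).} Multiply the Bernstein bound by $2^{js}$. The hypothesis $s-\frac dp\le s_1-\frac d{p_1}$ gives
\begin{align*}
2^{js}\norm{P_jf}_{L^p}\lesssim 2^{js_1}\norm{P_jf}_{L^{p_1}}, \qquad j\ge 1,
\end{align*}
and the low-frequency piece satisfies $\norm{P_{\le 0}f}_{L^p}\lesssim\norm{P_{\le0}f}_{L^{p_1}}$. Taking the $\ell^q$ norm in $j$ and using the embedding $\ell^{q_1}\hookrightarrow\ell^{q}$ for $q_1\le q$ gives $\norm{f}_{B^s_{p,q}}\lesssim\norm{f}_{B^{s_1}_{p_1,q_1}}$. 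The assumption $s\le s_1$ is not needed in this route, since the exponent condition already suffices.

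\textbf{Step 3 (part 2).} Apply Step 1 with $p_1=p$ and target exponent $\infty$:
\begin{align*}
\norm{P_jf}_{L^\infty}\lesssim 2^{jd/p}\norm{P_jf}_{L^p}.
\end{align*}
Hence the series $P_{\le0}f+\sum_{j\ge1}P_jf$ converges absolutely in $L^\infty$, and its sum is bounded by $C\norm{f}_{B^{d/p}_{p,1}}$. Each summand is a convolution of an $L^p$ function with a Schwartz kernel, so it is bounded and continuous. A uniform limit of bounded continuous functions lies in $C_b$.

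The main obstacle is identifying this uniform limit with $f$ itself, and this is the step I would treat most carefully. My approach is to note that the partial sums equal $P_{\le N}f$, because $\chi_{\le 0}+\sum_{j=1}^N\chi_j=\chi_{\le N}$ by telescoping. Next, $\chi_{\le N}\mathcal{F}f\to\mathcal{F}f$ in $\mathcal{S}'$, so $P_{\le N}f\to f$ in $\mathcal{S}'$. Uniform convergence implies convergence in $\mathcal{S}'$, so the two limits coincide. Consequently $f$ has a representative in $C_b$ with $\norm{f}_\infty\lesssim\norm{f}_{B^{d/p}_{p,1}}$.
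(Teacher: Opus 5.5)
Your argument is correct, but it is not a variant of anything in the paper: the paper gives no proof of this statement and simply cites Triebel [Theorem 2.8.1]. Your proof therefore serves as a self-contained replacement for that citation.

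Your route is the standard Fourier-analytic proof of such embeddings:
\begin{itemize}
\item you derive the Bernstein--Nikol'skii bound $\norm{P_jf}_{L^p}\lesssim 2^{jd(\frac{1}{p_1}-\frac{1}{p})}\norm{P_jf}_{L^{p_1}}$ from $P_j=\tilde P_jP_j$, the scaling of $\mathcal{F}^{-1}(\tilde\chi_j)$ and Young's inequality;
\item for part 1 you then sum using $\ell^{q_1}\hookrightarrow\ell^{q}$;
\item for part 2 you use absolute convergence in $L^\infty$.
\end{itemize}

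The citation buys brevity. Your version buys a check against the paper's own conventions:
\begin{itemize}
\item With the radii $\frac45,\frac85$ of $\eta_0$ one indeed has $\operatorname{supp}\chi_j\subset\{\frac25 2^j\le|\xi|\le\frac85 2^j\}$. On that annulus $\tilde\chi_j=\eta_0(2^{-j-1}\cdot)-\eta_0(2^{-j+2}\cdot)\equiv 1$.
\item The nonstandard normalisation of $\mathcal{F}$ only changes harmless constants.
\item Young's inequality covers the endpoint exponents $p_1=1$ and $p=\infty$ with no extra work. These are exactly the exponents the paper uses when it invokes $B^{d-1}_{1,1}(\R^{d-1})\hookrightarrow L^\infty(\R^{d-1})$.
\end{itemize}
You identify the uniform limit with $f$ via $P_{\le N}f\to f$ in $\mathcal{S}'$, which settles the only delicate point in part 2. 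Your remark that $s\le s_1$ is redundant is also right: it follows from $p_1\le p$ together with $s-\frac dp\le s_1-\frac d{p_1}$.

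One wording point. For the low-frequency piece the reproducing kernel must be $\mathcal{F}^{-1}(\chi_{\le 1})$, as you set up at the start, not $\mathcal{F}^{-1}(\tilde\chi_0)$. The function $\tilde\chi_0$ vanishes near the origin, so ``the same argument with $j=0$'' should be read in the former sense.
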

 
 \section{Bilinear estimates}
In this section we establish the key bilinear estimates for the collision operator. We begin with the case of free transport solutions, where the underlying transversality structure becomes transparent. The general case will then follow by a superposition argument.

\begin{lem}\label{lem: bilinear_free}
Let $s \geq d-1$.	There exists $C>0$ such that for all free transport solutions  
$$f(t,x,v) = f^0(x-tv,v), \qquad g(t,x,u) = g^0(x-tu,u)$$ with $f^0, g^0 \in \mathcal{X}^s$ the collision kernel satisfies $Q(f,g) \in {L_t^1\mathcal{X}^s}$ and
	\begin{align*}
	\norm{Q(f,g)}_{L_t^1\mathcal{X}^s} \leq C\norm{f^0}_{\mathcal{X}^s}\norm{g^0}_{\mathcal{X}^s}.
	\end{align*}
	\end{lem}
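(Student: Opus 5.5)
We treat the gain and loss parts separately, $Q(f,g)=Q^+(f,g)-Q^-(f,g)$. In each case we first reduce to $s=0$, i.e.\ to an $L_t^1L_x^1L_v^1$ bound, and then recover the regularity through the product structure. Since $\mathcal{X}^s$ is built from $L_x^1$ norms of $P_j$-pieces (respectively of derivatives $D^\alpha$), we would use the Leibniz rule for $W^{s,1}$ and a Littlewood--Paley paraproduct decomposition for $B_{1,1}^s$. In every term the factor carrying at least half of the derivatives is kept in $L^1_x$, while the other factor is placed in the space $L_\eta^1 L_y^\infty$ from the remark after Theorem~\ref{thm_glob_well_posedness}. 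That this space is available is guaranteed by Theorem~\ref{thm_sobolev_embedding} and Theorem~\ref{thm_besov_embedding} applied in the $d-1$ transversal variables, using $s\ge d-1$. Translations in $x$ commute with the free flow and with $P_j$, and the post-collisional maps are measure preserving in $(v,u)$, so the $s=0$ estimate below transfers to these pieces.

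The core is the transversality estimate for the loss term $Q^-(f,g)(t,x,v)=f^0(x-tv,v)\int g^0(x-tu,u)|v-u|\,\bar b\,du$, where $\bar b=\int b\,d\omega$ is bounded. We would fix $v,u$ and choose coordinates $x=(\eta,y)$ with $\eta$ parallel to $v-u$. After changing variables $x\mapsto x-tv$ we need to bound
\[
\int_0^\infty\!\!\int |F(x)|\,|G(x+t(v-u))|\,|v-u|\,dx\,dt ,
\]
with $F=f^0(\cdot,v)$ and $G=g^0(\cdot,u)$. Substituting $\tau=t|v-u|$ absorbs the factor $|v-u|$ exactly, which is precisely the reason hard spheres ($\gamma=1$) are critical. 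The integral becomes
\[
\int_{\R}\int_{\R^{d-1}}\int_0^\infty |F(\eta,y)|\,|G(\eta+\tau,y)|\,d\tau\,dy\,d\eta \le \|F\|_{L_x^1}\sup_y\|G(\cdot,y)\|_{L_\eta^1}\le \|F\|_{L^1_x}\,\|G\|_{L^1_\eta L^\infty_y}.
\]
The last norm is controlled by $\|G\|_{\mathcal{X}^{d-1}}$. The difficulty is that the direction $\eta$ depends on $v-u$, so we need a bound that is uniform over rotations; this holds because the Besov and Sobolev norms are rotation invariant up to constants. Integrating over $v$ and $u$ then gives $\|f^0\|_{L^1_xL^1_v}\|g^0\|_{\mathcal{X}^{d-1}}$.

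For the gain term we would proceed in the same way after the pre/post-collisional change of variables $(v,u,\omega)\mapsto(v^\ast,u^\ast,\omega)$, which has unit Jacobian and preserves $|v-u|$ and $|\cos\theta|$. We obtain the same integral with $F=f^0(\cdot,v^\ast)$ and $G=g^0(\cdot,u^\ast)$ evaluated at $x-tv^\ast$ and $x-tu^\ast$. The free-flow structure is intact because $f(t,x,v^\ast)=f^0(x-tv^\ast,v^\ast)$, and the relative velocity is now $v^\ast-u^\ast$, which has the same modulus. Repeating the transversal slicing along $v^\ast-u^\ast$ and using $b\lesssim|\cos\theta|\le1$, integration over $\omega$ costs only a constant.

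The step we expect to be the main obstacle is distributing the derivatives in the $B_{1,1}^s$ case. A $P_j$ applied to a product is not directly a product of projected functions, and in high--high interactions both factors sit at comparable frequency, so we have to sum $2^{js}$ over $k\ge j$ using the $\ell^1$ summability. The crucial point is that the $L_\eta^1L_y^\infty$ factor must receive a Bernstein-type bound $\|P_kG\|_{L^1_\eta L^\infty_y}\lesssim 2^{k(d-1)}\|P_kG\|_{L^1_x}$ that is uniform in the direction. We would try to prove this by writing $P_k$ as convolution with an $L^1$-normalized kernel (as in Lemma~\ref{lem_bound_P}) and applying Young's inequality in $y$.
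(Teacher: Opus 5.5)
There is a genuine gap in the $W^{s,1}$ case. The rest of your plan is sound and close to the paper: the pre/post-collisional change of variables for $Q^+$, and the transversal slicing, where you rotate so that $\eta\parallel v-u$ and set $\tau=t|v-u|$. The paper instead slices along the coordinate axis on which $|v^{(i)}-u^{(i)}|$ is maximal, which avoids rotating the norms at the price of a dimensional constant. Your $B^s_{1,1}$ case also works: in each paraproduct piece one factor absorbs the full weight $2^{js}$ in $L^1_x$, and the other only needs $L^1_\eta L^\infty_y$, at a cost of $d-1\le s$ derivatives, which your Bernstein bound supplies.

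The problem is with the Leibniz terms. After Leibniz you must bound $D^\alpha f\,D^\beta g$ with $1\le|\alpha|\le|\beta|$ and $|\alpha+\beta|\le s$. Placing $D^\alpha f$ in $L^1_\eta L^\infty_y$ requires $D^\alpha f(\eta,\cdot)\in W^{d-1,1}(\R^{d-1})$, i.e.\ $|\alpha|+d-1\le s$. With $|\alpha|$ as large as $\lfloor s/2\rfloor$, this fails for all integers $d-1\le s<2d-3$, in particular at the critical index $s=d-1$ for every $d\ge 3$. Concretely, take $d=3$, $s=2$, $v-u\parallel e_1$. The term $\partial_{x_2}f\,\partial_{x_3}g$ in $\partial_{x_2}\partial_{x_3}(fg)$ would need $\norm{\partial_{x_2}f}_{L^1_\eta L^\infty_y}\lesssim\norm{f}_{W^{2,1}(\R^3)}$. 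This is false because $W^{1,1}(\R^2)\not\hookrightarrow L^\infty(\R^2)$: take $f=\phi(\eta)\psi(y)$ with $\psi\in W^{2,1}(\R^2)$ and $\partial_{y_1}\psi$ unbounded. So Theorem~\ref{thm_sobolev_embedding} does not make $L^1_\eta L^\infty_y$ available for the factor with fewer derivatives.

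What is missing is a Hölder split with intermediate Lebesgue exponents in the transversal variables. With $F=D^\alpha f^0(\cdot,v)$ and $G=D^\beta g^0(\cdot,u)$ in your rotated frame, your slicing actually gives, for every $1\le p\le\infty$,
\[
\int_{\R}\int_{\R}\int_{\R^{d-1}}|F(\eta,y)|\,|G(\eta+\tau,y)|\,\mathrm{d}y\,\mathrm{d}\tau\,\mathrm{d}\eta\le\norm{F}_{L^1_\eta L^p_y}\norm{G}_{L^1_\eta L^{p'}_y}.
\]
The paper exploits exactly this with $p=|\alpha+\beta|/|\alpha|$ and $p'=|\alpha+\beta|/|\beta|$, followed by Sobolev embedding in the $d-1$ transversal variables. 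One needs $W^{s-|\alpha|,1}(\R^{d-1})\hookrightarrow L^{p}$ and $W^{s-|\beta|,1}(\R^{d-1})\hookrightarrow L^{p'}$, i.e.\ $(d-1)(1-\frac1p)\le s-|\alpha|$ and $(d-1)\frac1p\le s-|\beta|$. Such a $p$ exists precisely when $2s-|\alpha+\beta|\ge d-1$, which holds since $|\alpha+\beta|\le s$ and $s\ge d-1$; the paper's choice satisfies both conditions. With this replacement, keeping $L^\infty_y$ only when $\alpha=0$ or $\beta=0$, your argument also closes in the Sobolev case.
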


\begin{proof}
Due to the cutoff assumption, the gain and loss terms can be treated separately. Since, after a change of variables, both contributions are estimated in the same way, it suffices to consider the gain term $Q^+$.

\textbf{Case 1: $\mathcal{X}^s = W^{s,1}L_v^1$, where $s$ is natural.}

By a change of variables, Fubini's theorem and Leibniz' rule, 
	\begin{align*}
	&\norm{Q^+(f,g)}_{L_t^1W^{s,1}L_v^1} \\
	&\leq \sum_{\vert \alpha \vert \leq  s  }\int_{\R} \int_{\R^d}  \int_{\R^d}\int_{\omega \in S_+^{d-1}}\int_{\R^d}\left\vert D_x^{\alpha}\left(f(t,x,v^{\ast})g(t,x,u^{\ast})\right)  \right\vert \left\vert B(v-u,\omega)\right\vert\d x \d \omega \d u  \d v  \d t\\
	&=  \sum_{\vert \alpha \vert \leq  s  }\int_{\R} \int_{\R^d}  \int_{\R^d}\int_{\omega \in S_+^{d-1}}\int_{\R^d}\left\vert D_x^{\alpha}\left (f(t,x,v)g(t,x,u)\right)\right \vert \left \vert B(v-u,\omega)\right\vert\d x \d \omega \d u  \d v  \d t\\
	&=  \sum_{\vert \alpha + \beta \vert \leq  s  } \int_{\R} \int_{\R^d}  \int_{\R^d}\int_{\omega \in S_+^{d-1}}\int_{\R^d}\left \vert D_x^{\alpha}\left(f(t,x,v)\right)D_x^{\beta}\left(g(t,x,u)\right)  \right \vert \left \vert u-v \right\vert  \left \vert b(\cos(\theta)) \right \vert\d x \d \omega \d u  \d v  \d t.
\end{align*}
Using the representation of free solutions $f(t,x,v) = f^0 (x-tv,v), g(t,x,u) = g^0 (x-tu, u )$, we perform the change of variables $x \mapsto x + tu$, which yields
\begin{align*}
	&\int_{\R} \int_{\R^d}  \int_{\R^d}\int_{\omega \in S_+^{d-1}}\int_{\R^d}\left \vert D_x^{\alpha}\left(f(t,x,v)\right)D_x^{\beta}\left(g(t,x,u)\right)  \right\vert \left \vert u-v \right\vert  \left \vert b(\cos(\theta)) \right \vert\d x \d \omega \d u  \d v  \d t\\
	&=\int_{\R} \int_{\R^d}  \int_{\R^d}\int_{\omega \in S_+^{d-1}}\int_{\R^d}\left \vert D_x^{\alpha}\left(f^0(x-t(v-u),v)\right)D_x^{\beta}\left(g^0(x,u)\right)  \right \vert \left \vert u-v \right \vert  \left \vert b(\cos(\theta)) \right \vert \d x \d \omega \d u  \d v  \d t.
\end{align*}
The key observation is, that the interaction between the two free flows
is transversal unless $v=u$. Given $u,v \in \R^d$, we choose the spatial direction, where the componentwise difference is maximal. The calculation will be the same for all spatial directions and will work for the same leading constant. Without loss of generality we may assume that
$\vert v^{(1)} - u^{(1)}\vert = \max_i \vert v^{(i)}- u^{(i)} \vert $ and therefore, $\vert v-u \vert \leq d \vert v^{(1)}- u^{(1)} \vert$. 

We introduce the new variable  $\tau = x^{(1)}-t(v^{(1)}-u^{(1)})$, which corresponds to integration along the characteristic direction of the
relative velocity. The Jacobian of this transformation produces the factor
$|v^{(1)}-u^{(1)}|^{-1}$, which compensates the factor $\vert v-u \vert $.
Denoting $x' \defeq (x^{(2)},\dots, x^{(d)})$ and $v'$, $u'$ respectively, the $d-1$-dimensional vectors containing all components despite the first one and $t^{\ast}(\tau,x^{(1)},v^{(1)},u^{(1)}) \defeq \frac{x^{(1)}-\tau}{v^{(1)}-u^{(1)}}$, we obtain
\begin{align*}
	& \int_{\R^d}  \int_{\R^d}\int_{\omega \in S_+^{d-1}}\int_{\R^d}\int_{\R}\left \vert D_x^{\alpha}\left(f^0(x-t(v-u),v)\right)D_x^{\beta}\left(g^0(x,u)\right)  \right\vert \left \vert u-v \right \vert  \vert b(\cos(\theta)) \vert \d t \d x \d \omega  \d u  \d v \\
	&= \int_{\R^d}  \int_{\R^d}\int_{\omega \in S_+^{d-1}}\int_{\R^d} \int_{\R} \left \vert D_x^{\alpha} f^0(\tau,x'-t^{\ast}(v'-u'),v) D_x^{\beta}\left(g^0(x,u)\right) \right \vert \\
	&\vert u-v \vert  \left \vert u^{(1)}-v^{(1)} \right \vert^{-1} \vert b(\cos(\theta))\vert \d \tau \d x \d \omega  \d u  \d v \\
	&\leq C \norm{f^0(\tau,x',v)}_{L_{v}^1W_{\tau}^{\vert \alpha \vert ,1}W_{x'}^{\vert \alpha \vert ,\frac{\vert \alpha + \beta \vert }{\vert \alpha \vert }}} \norm{g^0(x,u)}_{L_{u}^{1}W_{x^{(1)}}^{\vert \beta \vert ,1}W_{x'}^{\vert \beta \vert ,\frac{\vert \alpha + \beta \vert}{\vert \beta \vert}}}.
\end{align*}
By the Sobolev embeddings $W_{x'}^{ s  ,1}\left(\R^{d-1}\right) \hookrightarrow W_{x'}^{\vert \alpha + \beta \vert,1}\left(\R^{d-1}\right) \hookrightarrow
W_{x'}^{\vert \alpha \vert ,\frac{\vert \alpha + \beta \vert }{\vert \alpha \vert }}\left(\R^{d-1}\right)$ and  $W_{x'}^{ s  ,1}\left(\R^{d-1}\right) \hookrightarrow  W_{x'}^{d-1,1}\left(\R^{d-1}\right)
\hookrightarrow L^\infty\left(\R^{d-1}\right)$  we conclude   
\begin{align*}
\norm{Q(f,g)}_{L_t^1\mathcal{X}^s}\leq C\norm{f^0}_{W^{ s  ,1}L_{v}^{1}} \norm{g^0}_{W^{ s  ,1}L_{v}^{1}}.
\end{align*}

\textbf{Case 2: $\mathcal{X}^s = B_{1,1}^{s}L_v^1$}.
We apply the Littlewood-Paley decomposition in the spatial variable. By Fubini's theorem and the cutoff bound $B(v-u,\omega) \lesssim \vert v - u \vert$,
	\begin{align*}
		&\norm{Q^+(f,g)}_{{L_t^1B_{1,1}^{s}L_{v}^1}} \\
	&\lesssim \int_{\R} \int_{\R^d}  \int_{\R^d}\int_{\R^d}\left \vert P_{\leq 0}\left( f(t,x,v)g(t,x,u)\right) \right  \vert \vert v-u\vert\d x \d u  \d v  \d t\\
	&+ \sum_{j=1}^{\infty}  \int_{\R} \int_{\R^d}  \int_{\R^d}\int_{\R^d} 2^{js} \left \vert P_j \left(f(t,x,v)g(t,x,u)\right) \right \vert \vert v-u\vert\d x \d u  \d v  \d t
\end{align*}    Using again the representation of free solutions $f(t,x,v) = f^0 (x-tv,v), g(t,x,u) = g^0 (x-tu, u )$, the change of variables $x \mapsto x + tu$, and the transversality argument, the factor $v-u$ is compensated as above, reducing the estimate to 
\begin{align*}
& \int_{\R} \int_{\R^d}  \int_{\R^d}\int_{\R^d}\left \vert P_{\leq 0 }\left(f(t,x,v) g(t,x,u)\right) \right\vert    \vert u-v \vert   \d x  \d u  \d v  \d t\\
&+ \sum_{j=1}^{\infty}  \int_{\R} \int_{\R^d}  \int_{\R^d}\int_{\R^d} 2^{js} \left\vert P_j \left(f(t,x,v)g(t,x,u)\right) \right \vert \vert v-u\vert\d x \d u  \d v  \d t\\
	&\lesssim \int_{\R^d}  \int_{\R^d}\int_{\R^d} \int_{\R}\left\vert P_{\leq 0}\left(f^0(\tau,x'-t^{\ast}(v'-u'),v) g^0(x,u)\right)\right\vert  \d \tau \d x  \d u  \d v\\
	&+  \sum_{j=1}^{\infty}\int_{\R^d}  \int_{\R^d}\int_{\R^d} \int_{\R}2^{js} \left \vert P_{j}\left(f^0(\tau,x'-t^{\ast}(v'-u'),v) g^0(x,u)\right)\right\vert\d \tau \d x  \d u  \d v\\
	&\defeq I_1 + I_2.
	\end{align*}

We now calculate each of the two integrals separately. Using the bound of the Littlewood-Paley cutoff-operators (Lemma \ref{lem_bound_P}) and Hölder’s inequality, we obtain
\begin{align*}
	& \int_{\R^d}  \int_{\R^d}\int_{\R^d} \int_{\R}\left \vert P_{\leq 0}(f^0(\tau,x'-t^{\ast}(v'-u'),v) g^0(x,u))\right \vert  \d \tau \d x  \d u  \d v\\
	&\leq C \norm{f^0(\tau,x',v)}_{L_{v,\tau}^1L_{x'}^{\infty}} \norm{g^0(x,u)}_{L_x^1L_{u}^{1}}.
\end{align*}
Finally, for  $s\geq d-1$, the Besov embedding (Theorem \ref{thm_besov_embedding}) yields   
\begin{align*}
	&\norm{f^0(\tau,x',v)}_{L_{v,\tau}^1L_{x'}^{\infty}} \norm{g^0(x,u)}_{L_x^1L_{u}^{1}}\leq C \norm{f^0}_{B_{1,1}^sL_{v}^{1}} \norm{g^0}_{B_{1,1}^sL_{v}^{1}}, 
\end{align*}
which completes the proof for the first integral.

By Littlewood-Paley decomposition 
\begin{align*}
	fg &= \left(P_{\leq 0 }f +\sum_{k=1}^{\infty}P_k f\right)\cdot \left(P_{\leq 0 }g + \sum_{i=1}^{\infty}P_i g\right)\\
	&=P_{\leq 0}fP_{\leq 2 }g + \sum_{k=1}^2P_{k}fP_{\leq 0 }g \\&+ \sum_{k=1}^{\infty}\sum_{\substack{1\leq i\leq \infty,\\ \vert i-k\vert \leq 2}} P_{k}fP_{i}g
	 +\sum_{i=3}^{\infty} P_{\leq i-3}fP_i g  +\sum_{k=3}^{\infty} P_k f P_{\leq k-3}g.	
\end{align*}
For the second integral, inserting this decomposition and by triangle inequality it remains to calculate five integrals.
We now estimate these integrals separately. Each term is treated using the support properties of the Littlewood-Paley projections, the $L^p$-boundedness of $P_j$ (Lemma~\ref{lem_bound_P}), the Besov embedding
$B_{1,1}^{s}\left(\R^{d-1}\right)  \hookrightarrow L^\infty\left(\R^{d-1} \right) $  (Theorem \ref{thm_besov_embedding}),
and a dyadic summation.

More precisely, the first of these integrals reduces to 
\begin{align*}
I_{21}	&=\sum_{j=1}^{\infty}\int_{\R^d}  \int_{\R^d}\int_{\R^d} \int_{\R}2^{js} \left \vert P_{j}\left(P_{\leq 0}f^0(\tau,x'-t^{\ast}(v'-u'),v) P_{\leq 2 }g^0(x,u)\right) \right \vert\d \tau \d x  \d u  \d v\\
	&\leq \sum_{j = 1}^6\int_{\R^d}  \int_{\R^d}\int_{\R^d} \int_{\R}2^{js} \left \vert P_{j}\left(P_{\leq 0}f^0(\tau,x'-t^{\ast}(v'-u'),v) P_{\leq 2 }g^0(x,u)\right) \right \vert\d \tau \d x  \d u  \d v\\
	&\leq C\sum_{j = 1}^6	2^{js}\norm{P_{\leq 0 }f^0(\tau,x',v)}_{L_{v,\tau}^1L_{x'}^{\infty}} \norm{P_{\leq 2 }g^0(x,u)}_{L_x^1L_{u}^{1}}.
\end{align*}
By Lemma \ref{lem_bound_P} and the Besov embedding (Theorem \ref{thm_besov_embedding}) 
\begin{align*}
	\norm{P_{\leq 0 }f^0(\tau,x',v)}_{L_{v,\tau}^1L_{x'}^{\infty}} \leq C\norm{f^0}_{B_{1,1}^sL_{v}^{1}}.
\end{align*}
With the boundedness of $\sum_{j=1}^6 2^{js}$ we obtain 
\begin{align*}
	&\sum_{j = 1}^6	2^{js}\norm{P_{\leq 0 }f^0(\tau,x',v)}_{L_{v,\tau}^1L_{x'}^{\infty}} \norm{P_{\leq 2 }g^0(x,u)}_{L_x^1L_{u}^{1}} \leq  C \norm{f^0}_{B_{1,1}^sL_{v}^{1}} \norm{g^0}_{B_{1,1}^sL_{v}^{1}}. 
\end{align*}
With an analogous calculation we estimate the second integral and obtain
\begin{align*}
	I_{22}	&=\sum_{j=1}^{\infty}\int_{\R^d}  \int_{\R^d}\int_{\R^d} \int_{\R}2^{js} \left \vert P_{j}\left(P_{\leq 2}f^0(\tau,x'-t^{\ast}(v'-u'),v) P_{\leq 0 }g^0(x,u)\right)\right \vert\d \tau \d x  \d u  \d v\\
	& \leq  C \norm{f^0}_{B_{1,1}^sL_{v}^{1}} \norm{g^0}_{B_{1,1}^sL_{v}^{1}}. 
\end{align*}
 Applying again the support of the Littlewood-Paley projections and Lemma \ref{lem_bound_P} the third integral reduces to 
 \begin{align*}
 	I_{23}	& =\sum_{j=1}^{\infty}\int_{\R^d}  \int_{\R^d}\int_{\R^d} \int_{\R}2^{js} \left \vert P_{j}\left(\sum_{k=1}^{\infty}\sum_{\vert i-k\vert \leq 2} P_{k}f^0(\tau,x'-t^{\ast}(v'-u'),v) P_{i}g^0(x,u)\right)\right \vert\d \tau \d x  \d u  \d v\\
 	&\leq \sum_{j=1}^{\infty}\sum_{\max(k,i)\geq j-3}\sum_{\vert i-k\vert \leq 2}\int_{\R^d}  \int_{\R^d}\int_{\R^d} \int_{\R}2^{js} \left \vert P_{j}\left( P_{k}f^0(\tau,x'-t^{\ast}(v'-u'),v) P_{i}g^0(x,u)\right)\right \vert\d \tau \d x  \d u  \d v\\
 	&\leq C \sum_{j=1}^{\infty}\sum_{i\geq j-5}2^{js}\norm{f^0(\tau,x',v)}_{L_{v,\tau}^1L_{x'}^{\infty}} \norm{P_ig^0(x,u)}_{L_x^1L_{u}^{1}}\\
 	& = C \sum_{i=1}^{\infty}2^{is}\norm{f^0(\tau,x',v)}_{L_{v,\tau}^1L_{x'}^{\infty}} \norm{P_ig^0(x,u)}_{L_x^1L_{u}^{1}}\sum_{j-i\leq 5}2^{(j-i)s}.
 \end{align*}
By Besov embedding and the boundedness of $\sum_{j-i \leq 5} 2^{(j-i)s}$ the proof of the third integral is complete. With the same arguments, the fourth integral reduces to 
  \begin{align*}
 		I_{24}	& = \sum_{j=1}^{\infty}\int_{\R^d}  \int_{\R^d}\int_{\R^d} \int_{\R}2^{js}\left \vert P_{j}\left(\sum_{i=3}^{\infty} P_{\leq i-3} f^0(\tau,x'-t^{\ast}(v'-u'),v)  P_{i}g^0(x,u)\right)\right \vert\d \tau \d x  \d u  \d v\\
 		&\leq \sum_{j=1}^{\infty}\sum_{\vert i-j \vert \leq 4} \int_{\R^d}  \int_{\R^d}\int_{\R^d} \int_{\R}2^{js}\left \vert P_{j}\left(P_{\leq i-3} f^0(\tau,x'-t^{\ast}(v'-u'),v)  P_{i}g^0(x,u)\right)\right \vert\d \tau \d x  \d u  \d v\\
 		&\leq  C \sum_{j=1}^{\infty}\sum_{\vert i-j \vert \leq 4}2^{js}\norm{f^0(\tau,x',v)}_{L_{v,\tau}^1L_{x'}^{\infty}} \norm{P_ig^0(x,u)}_{L_x^1L_{u}^{1}}.
 \end{align*}
 Again, the Besov embedding and the boundedness of $\sum_{\vert j-i \vert \leq 4} 2^{(j-i)s}$ complete the proof. Finally, with an analogous calculation the fifths integral is bounded by 
  \begin{align*}
  		I_{25}	& =  \sum_{j=1}^{\infty}\int_{\R^d}  \int_{\R^d}\int_{\R^d} \int_{\R}2^{js}\left  \vert P_{j}\left(\sum_{i=3}^{\infty} P_i g^0(x,u) P_{i-3}f^0(\tau,x'-t^{\ast}(v'-u'),v) \right) \right \vert\d \tau \d x  \d u  \d v\\
  		& \leq  C  \norm{f^0}_{B_{1,1}^sL_{v}^{1}} \norm{g^0}_{B_{1,1}^sL_{v}^{1}},
  \end{align*}
  which completes the proof of Lemma \ref{lem: bilinear_free}.
\end{proof}

\begin{remark}
The same argument applies to anisotropic Besov spaces as in \cite{HLZ2026}. The index $s=d-1$ corresponds to the scaling-critical case.
\end{remark}

We now extend the bilinear estimate from free solutions to general elements of $\mathcal{N}^s$.
The key observation is that functions in $\mathcal{N}^s$ can be represented as superpositions of free transport flows.

 \begin{lem}\label{lem: free_flow}
 	If $f \in \mathcal{N}^s$, then there exists $h \in \mathcal{X}^s$ such that
 	\begin{align}
\vert f(t,x,v) \vert &\leq h(x-tv,v) &\text{ a.e. in }\R\times \R^d,\\
\left \vert P_j\left(f(t,x,v)\right) \right\vert &\leq P_j\left(h(x-tv,v)\right) & \text{a.e. in } \R\times \R^d \text{for each } j, \text{ if } \mathcal{X}^s = B_{1,1}^sL_v^1,\\
\left \vert D^{\alpha}\left(f(t,x,v)\right) \right \vert &\leq D^\alpha\left(h(x-tv,v)\right) & \text{a.e. in } \R\times \R^d \text{for each } 0 \leq \vert \alpha \vert \leq  s  , \text{ if } \mathcal{X}^s = W^{ s  ,1}L_v^1,  	\end{align}
 	with 
 	\begin{align}
 		\norm{f}_{\mathcal{N}^s} = \norm{h}_{\mathcal{X}^s}.
 	\end{align}
 \end{lem}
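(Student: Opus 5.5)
The plan is to use the Duhamel representation along characteristics, which is the same formula used for completeness of $\mathcal{N}^s$. For $f\in\mathcal{N}^s$ set $f^0=f(0)$ and $F=Af\in L^1_t\mathcal{X}^s$. Then
\begin{align*}
f(t,x,v)=f^0(x-tv,v)+\int_0^t F(s,x-(t-s)v,v)\,\d s .
\end{align*}
Substituting $y=x-tv$, every term is a function of $(x-tv,v)$ evaluated on a free flow. This suggests the candidate
\begin{align*}
h(y,v)\defeq |f^0(y,v)|+\int_{\R}|F(s,y+sv,v)|\,\d s .
\end{align*}
It is the superposition of the free flows generated by the data $f^0$ and by each slice $F(s)$, transported back to time $0$. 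The bound $|f(t,x,v)|\le h(x-tv,v)$ is then immediate: take absolute values in the Duhamel formula and enlarge $\int_0^t$ to $\int_{\R}$.

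Next comes the key structural fact. For fixed $v$, the operators $D^\alpha_x$ and $P_j$ are spatial Fourier multipliers, so they commute with the spatial translation $y\mapsto y+sv$. Hence
\begin{align*}
D^\alpha f(t,x,v)&=(D^\alpha f^0)(x-tv,v)+\int_0^t (D^\alpha F)(s,x-(t-s)v,v)\,\d s,\\
P_j f(t,x,v)&=(P_j f^0)(x-tv,v)+\int_0^t (P_j F)(s,x-(t-s)v,v)\,\d s .
\end{align*}
In each case the right-hand side is bounded pointwise by the same superposition built from $|D^\alpha f^0|$, $|D^\alpha F|$, respectively $|P_jf^0|$, $|P_jF|$. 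The expressions $D^\alpha(h(x-tv,v))$ and $P_j(h(x-tv,v))$ in the statement are to be read in exactly this sense. That is, $h$ is the superposition of the free flows $f^0$ and $F(s)$, and applying $D^\alpha$ or $P_j$ to it means applying the operator inside each superposed free flow before taking moduli. This is precisely the form in which the majorant enters the proof of Lemma \ref{lem: bilinear_free}: there one only needs $|D^\alpha f|$ or $|P_jf|$ dominated by a free transport profile. I would state this convention explicitly at the start of the proof.

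The norm identity should then follow from Fubini and translation invariance. For each fixed $(s,v)$ the map $y\mapsto F(s,y+sv,v)$ is a translate, so its $L^1_y$ norm, its $\|D^\alpha\cdot\|_{L^1_y}$, and its $\|P_j\cdot\|_{L^1_y}$ all coincide with those of $F(s,\cdot,v)$. Integrating in $v$ and $s$ and summing over $|\alpha|\le s$ (respectively over $j$ with weights $2^{js}$, plus the $P_{\le 0}$ term) gives the following exact identity for the Sobolev case:
\begin{align*}
\sum_{|\alpha|\le s}\Big(\norm{D^\alpha f^0}_{L^1_xL^1_v}+\int_{\R}\norm{D^\alpha F(s)}_{L^1_xL^1_v}\d s\Big)=\norm{f(0)}_{\mathcal{X}^s}+\norm{Af}_{L^1_t\mathcal{X}^s}=\norm{f}_{\mathcal{N}^s}.
\end{align*}
The Besov case is identical. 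There is no loss, since the moduli are taken before integration and no triangle inequality in $y$ is used. This is what gives $\norm{h}_{\mathcal{X}^s}=\norm{f}_{\mathcal{N}^s}$, with $\norm{h}_{\mathcal{X}^s}$ understood as the sum of the norms of the superposed majorants.

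The main obstacle I expect is exactly this identification: making the single object $h$ carry all the inequalities at once, for every $\alpha$ and every $j$, while keeping the norm equality. If one instead takes the literal function $h$ above and applies $D^\alpha$ to it, the modulus destroys differentiability and only the inequality $\le$ survives. I would therefore first fix the superposition interpretation, then check measurability of $(s,y,v)\mapsto F(s,y+sv,v)$ so that Fubini applies. Finally I would verify that the bilinear estimates of Lemma \ref{lem: bilinear_free} extend to such superpositions by Minkowski's inequality in the superposition parameter.
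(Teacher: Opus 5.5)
Your construction is the paper's: the same Duhamel majorant $h(y,v)=|f^0(y,v)|+\int|Af(r,y+rv,v)|\,\d r$. The one difference in the construction is that the paper integrates only over $r\in[0,\infty)$, which does not control $f(t)$ for $t<0$, although $\mathcal{N}^s$ lives on $t\in\R$; your $\int_{\R}$ is the right choice.

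The real difference is that the paper declares $h\in\mathcal{X}^s$ and all the pointwise bounds ``immediate'', whereas you refuse to read $D^\alpha(h(x-tv,v))$, $P_j(h(x-tv,v))$ and $\norm{h}_{\mathcal{X}^s}$ literally. You are right to refuse, and the obstruction is stronger than loss of differentiability: for general $f$, no function $h$ can work.
\begin{itemize}
\item For $j\ge1$ one has $\chi_j(0)=0$, so $\int P_j\phi\,\d x=0$ for every $\phi\in L^1$. Hence $(P_jh)(x-tv,v)\ge|P_jf(t,x,v)|\ge0$ forces $P_jh=0$, and then $P_jf\equiv0$.
\item Likewise $\int\partial_{x^{(1)}}\phi\,\d x=0$ for $\phi\in W^{1,1}$. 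So the Sobolev bound with $D^\alpha=\partial_{x^{(1)}}$ forces $\partial_{x^{(1)}}f\equiv0$, i.e.\ $f\equiv0$.
\item For the free flow with $f^0=x^{(1)}\varphi(x)\psi(v)$, your $h$ equals $|f^0|\notin W^{2,1}L^1_v$. So $h\notin\mathcal{X}^s$ once $s\ge2$.
\end{itemize}
Thus the lemma as stated fails for essentially every nonzero $f$. The paper's one-line proof does not establish it, and a reformulation like yours is unavoidable.

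The price of your reading is that the lemma becomes a tautology. The norm ``identity'' is just the definition of $\norm{f}_{\mathcal{N}^s}$, so all the content moves into Lemma \ref{lem: bilinear}. Your last sentence (extend Lemma \ref{lem: bilinear_free} by Minkowski in the superposition parameter) is then the actual proof.

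That step is cleanest without moduli. Write $f(t,x,v)=\phi_\ast(x-tv,v)+\int_{\R}\sigma_t(r)\,\phi_r(x-tv,v)\,\d r$, where $\phi_\ast=f(0)$, $\phi_r(y,v)=Af(r,y+rv,v)$, and $\sigma_t(r)\in\{0,\pm1\}$ is the signed indicator of the interval between $0$ and $t$. Write $g$ the same way with profiles $\psi_\ast,\psi_{r'}$. Three facts then combine:
\begin{itemize}
\item $Q$ is bilinear and acts pointwise in $(t,x)$.
\item For fixed $(r,r')$, the factor $\sigma_t(r)\sigma_t(r')$ depends only on $t$ and has modulus at most $1$.
\item $\norm{\phi_r}_{\mathcal{X}^s}=\norm{Af(r)}_{\mathcal{X}^s}$ by translation invariance.
\end{itemize}
Minkowski and Lemma \ref{lem: bilinear_free}, applied to each pair of free flows, then give $\norm{Q(f,g)}_{L^1_t\mathcal{X}^s}\le C\bigl(\norm{\phi_\ast}_{\mathcal{X}^s}+\int\norm{\phi_r}_{\mathcal{X}^s}\d r\bigr)\bigl(\norm{\psi_\ast}_{\mathcal{X}^s}+\int\norm{\psi_{r'}}_{\mathcal{X}^s}\d r'\bigr)=C\norm{f}_{\mathcal{N}^s}\norm{g}_{\mathcal{N}^s}$.

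This route uses Lemma \ref{lem: bilinear_free} as a black box. Your modulus-based majorants instead force you to reopen its proof and check that only pointwise bounds on $|D^\alpha f|$ and $|P_jf|$ enter. In the Besov case that is not quite true: $P_j$ acts on the product $P_kf\,P_ig$, and $P_{\le k}$ pieces also occur. You would first have to strip $P_j$ off with Lemma \ref{lem_bound_P} before your majorants can be inserted.
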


 \begin{proof}
 	Let 
 	\begin{align*}
 	 	A(t,x,v) = \partial_t f + v \cdot \nabla _x f.
 	\end{align*}
By Duhamel's formula
 	\begin{align*}
 		f(t,x,v) = f^0(x-tv,v) + \int_0^t A(s,x-tv+sv,v)\d s. 
 	\end{align*}
  Define
 \begin{align*}
 	h(x,v) = \vert f^0(x,v) \vert + \int_0^{\infty}\vert A(s,x+sv,v)\vert \d s. 
 \end{align*}
Then $h\in \mathcal{X}^s$ and the claimed pointwise bounds follow immediately from the representation formula. 
 \end{proof}
 
 \begin{lem}\label{lem: bilinear}
 	Let $s \geq d-1$. There exists $C >0$ such that for all $f,g \in \mathcal{N}^s$ the collision kernel satisfies $Q(f,g) \in L_t^1\mathcal{X}^s$ and $\norm{Q(f,g)}_{L_t^1\mathcal{X}^s} \leq C\norm{f}_{\mathcal{N}^s}\norm{g}_{\mathcal{N}^s}$.  
 	\end{lem}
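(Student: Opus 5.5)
The plan is to reduce Lemma \ref{lem: bilinear} to the free-transport estimate of Lemma \ref{lem: bilinear_free} through the majorization of Lemma \ref{lem: free_flow}. Given $f,g\in\mathcal{N}^s$, Lemma \ref{lem: free_flow} provides $h_f,h_g\in\mathcal{X}^s$ with $\norm{h_f}_{\mathcal{X}^s}=\norm{f}_{\mathcal{N}^s}$ and $\norm{h_g}_{\mathcal{X}^s}=\norm{g}_{\mathcal{N}^s}$. These satisfy the pointwise bounds $|f(t,x,v)|\le h_f(x-tv,v)$ and $|g(t,x,u)|\le h_g(x-tu,u)$. In the Sobolev case they also satisfy the corresponding bounds for $D^\alpha$, and in the Besov case for $P_j$. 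Define the free solutions $F(t,x,v)=h_f(x-tv,v)$ and $G(t,x,u)=h_g(x-tu,u)$. The target is
\begin{align*}
\norm{Q(f,g)}_{L_t^1\mathcal{X}^s}\le \norm{Q^{\mathrm{abs}}(F,G)}_{L_t^1\mathcal{X}^s}\lesssim \norm{h_f}_{\mathcal{X}^s}\norm{h_g}_{\mathcal{X}^s}.
\end{align*}
Here $Q^{\mathrm{abs}}$ denotes the expression that the proof of Lemma \ref{lem: bilinear_free} actually bounds, namely the integrals of absolute values of the (differentiated or localized) products. The second inequality is exactly that proof.

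As in Lemma \ref{lem: bilinear_free}, I will treat the gain term only, using the cutoff assumption. For the loss term the same argument applies after the change of variables.

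In the Sobolev case I first perform the steps of Case 1 that do not use the free structure. These are the collision change of variables $(v^\ast,u^\ast)\mapsto(v,u)$, Fubini, and the Leibniz rule. They reduce the norm to
\begin{align*}
\sum_{|\alpha+\beta|\le s}\int |D^\alpha f(t,x,v)|\,|D^\beta g(t,x,u)|\,|v-u|\,\d x\,\d u\,\d v\,\d t .
\end{align*}
I then insert the pointwise bounds $|D^\alpha f|\le |D^\alpha F|$ and $|D^\beta g|\le |D^\beta G|$ from Lemma \ref{lem: free_flow}. Since every factor is nonnegative, the integral is dominated by the same expression for $F,G$. From there the transversality computation (the shift $x\mapsto x+tu$, the choice of the maximal coordinate, and the substitution $\tau$) goes through verbatim and gives the claim.

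The Besov case is the main obstacle. There the quantity $P_j(fg)$ is not directly controlled by pointwise bounds on $P_kf$ and $P_ig$, because $P_j$ of a product is not monotone. The way around this is to paraproduct-decompose first, writing $fg$ as the five sums in Case 2. Then $P_j$ is applied to each piece, and Young's inequality is used with the $L^1$-bounded kernel of $P_j$ (Lemma \ref{lem_bound_P}). This is valid because in Case 2 all that was ever used is $\norm{P_j(\cdot)}_{L^1}\lesssim\norm{\cdot}_{L^1}$ together with the frequency restrictions that determine which $j$ survive. Those restrictions depend only on the Fourier supports of $P_kf$ and $P_ig$, not on free transport. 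After this step the integrand involves $|P_kf|\,|P_ig|$, and Lemma \ref{lem: free_flow} bounds it by $|P_kF|\,|P_iG|$. The transversality change of variables and the embedding $B^{s}_{1,1}(\R^{d-1})\hookrightarrow L^\infty$ then apply as before. The one point to check carefully is the low-frequency pieces $P_{\le k}f$. They are finite sums of $P_\ell f$, so the triangle inequality reduces them to the dyadic bounds at the cost of at most a harmless logarithmic factor, which the summation against $2^{(j-i)s}$ absorbs.

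Finally, the bound is finite, so $Q(f,g)\in L_t^1\mathcal{X}^s$. The constant is the constant $C$ of Lemma \ref{lem: bilinear_free}, uniform in $f$ and $g$.
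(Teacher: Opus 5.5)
Your reduction is the paper's: majorize $f,g$ by free flows via Lemma~\ref{lem: free_flow} and rerun the proof of Lemma~\ref{lem: bilinear_free} on the majorants. You insert the majorants only once nothing but $\int|D^\alpha f|\,|D^\beta g|\,|v-u|$, resp.\ $\int|P_kf|\,|P_ig|\,|v-u|$, is left (after Leibniz, resp.\ after the paraproduct decomposition and Young's inequality). That is what gives the paper's one-line ``apply Lemma~\ref{lem: bilinear_free} to $h$ and $e$'' a meaning, since $\|Q(f,g)\|_{L^1_t\mathcal{X}^s}$ is not monotone under pointwise domination.

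\textbf{The low-frequency step is wrong as written.} A logarithmic factor would not be absorbed. For $P_{\le i-3}f\,P_ig$ (and symmetrically $P_kf\,P_{\le k-3}g$), only $|j-i|\le 4$ contributes. So $\sum_j 2^{(j-i)s}=O(1)$ gives no decay in $i$, and you would be left with $\sum_i i\,2^{is}\|P_ih_g\|_{L^1_{x,u}}$, which $\|h_g\|_{B^s_{1,1}L^1_v}$ does not control. The correct observation is that there is no loss at all. Bernstein's inequality in $x'$ gives $\|P_\ell h\|_{L^1_{v,\tau}L^\infty_{x'}}\lesssim 2^{\ell(d-1)}\|P_\ell h\|_{L^1_{x,v}}$. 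Hence $\sum_{\ell\le i-3}\|P_\ell h_f\|_{L^1_{v,\tau}L^\infty_{x'}}\lesssim\|h_f\|_{B^{d-1}_{1,1}L^1_v}$ uniformly in $i$. This $\ell^1$-summation is precisely where $q=1$ is needed at $s=d-1$.

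\textbf{Lemma~\ref{lem: free_flow} cannot hold as stated.} Your argument, like the paper's, rests on its bounds for $P_j$ and $D^\alpha$, and these fail for any single $h\in\mathcal{X}^s$. For a free solution $f=f^0(x-tv,v)$, the $P_j$-bound gives $P_jh\ge|P_jf^0|\ge 0$. But $\int_{\R^d}P_jh\,\d x=0$ for $j\ge 1$, because $\chi_j$ vanishes near $\xi=0$. This forces $P_jf^0=0$. The $D^\alpha$-bound fails the same way, since $\int D^\alpha h\,\d x=0$ for $|\alpha|\ge1$.

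\textbf{Your blockwise structure repairs this.} Majorize each block separately: $|P_kf(t,x,v)|\le h_k(x-tv,v)$ with $h_k(y,v):=|P_kf^0(y,v)|+\int_{\R}|P_k(Af)(s,y+sv,v)|\,\d s$, which is Duhamel's formula for $P_kf$. Minkowski and Bernstein give $\|h_k\|_{L^1_{v,\tau}L^\infty_{x'}}\lesssim 2^{k(d-1)}(\|P_kf^0\|_{L^1_{x,v}}+\|P_kAf\|_{L^1_{t,x,v}})$ for every choice of the distinguished coordinate. The transversality computation never uses frequency localization of the majorant, so all sums close with $\|f\|_{\mathcal{N}^s}\|g\|_{\mathcal{N}^s}$. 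In the Sobolev case, build $h_\alpha$ from $D^\alpha f^0$ and $D^\alpha(Af)$, and apply the same Minkowski argument in the $L^p_{x'}$ norms of Case~1.
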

 
 \begin{proof}
 	By Lemma \ref{lem: free_flow} there exist $h, e \in \mathcal{X}^s$ such that 
 	\begin{align*}
 		|f(t,x,v)|\le h(x-tv,v),	\qquad	|g(t,x,u)|\le e(x-tu,u), 
 	\end{align*}
 	and 
 	 \begin{align*}
 	 	\norm{f}_{\mathcal{N}^s} = \norm{h}_{\mathcal{X}^s}, \qquad	\norm{g}_{\mathcal{N}^s} = \norm{e}_{\mathcal{X}^s}.
 	 \end{align*}
 Applying Lemma \ref{lem: bilinear_free} to $h$ and $e$ yields the desired result.  
 \end{proof}

 \section{Global well-posedness for small initial data}
  We now combine the bilinear estimate with a fixed-point argument in the space $\mathcal{N}^s$ in order to proof Theorem \ref{thm_glob_well_posedness}.
 
 \begin{proof}[Proof of Theorem \ref{thm_glob_well_posedness}]
 	 Let $f^0 \in \mathcal{X}^s$ be given. For $h \in \mathcal{N}^s$, define the mapping $\Phi(h) = f$ on $\mathcal{N}^s$, where $f$ is the solution to the linear transport equation
 	\begin{align*}
 		\partial_t f + v \cdot \nabla_x f &= Q(h,h),\\
 		f(0,x,v) &= f^0(x,v). 
 	\end{align*}
 	By Lemma \ref{lem: bilinear} the collision kernel is bounded by 
 	\begin{align*}
 		\|Q(h,h)\|_{L_t^1\mathcal{X}^s}
 		\le
 		C \|h\|_{\mathcal{N}^s}^2.
 	\end{align*}
 	Hence,
 	\begin{align*}
\|\Phi(h)\|_{\mathcal{N}^s}
=
\|f^0\|_{\mathcal{X}^s}
+
\|Q(h,h)\|_{L_t^1\mathcal{X}^s}
\le
\|f^0\|_{\mathcal{X}^s}
+
C\|h\|_{\mathcal{N}^s}^2.
 	\end{align*}
 	Choose $R=\frac{1}{4C}$ and assume  $\norm{f^0}_{\mathcal{X}^s} < \frac{3}{16C}\defeq\varepsilon_0$. 
 	Then $\Phi$ maps the closed ball $B_R := \{ h \in \mathcal{N}^s : \|h\|_{\mathcal{N}^s} \le R \}$ into itself.
 	
 	Moreover, for $h_1,h_2\in B_R$,
 	\begin{align*}
 		\|Q(h_1,h_1)-Q(h_2,h_2)\|_{L_t^1\mathcal{X}^s} &=  \|Q(h_1+h_2,h_1-h_2)\|_{L_t^1\mathcal{X}^s}\\		
 		&\le
 		C(\|h_1\|_{\mathcal{N}^s}+\|h_2\|_{\mathcal{N}^s})
 		\|h_1-h_2\|_{\mathcal{N}^s}
 		\le
 		2CR \|h_1-h_2\|_{\mathcal{N}^s}.
 	\end{align*}
 
 Since $2CR=\frac12$, the mapping $\Phi$ is a contraction on $B_R$.
 The contraction mapping principle yields existence and uniqueness
 of a global mild solution $f\in\mathcal{N}^s$.
 
 It remains to prove nonnegativity. This follows from a standard approximation procedure as in \cite{DL1989b} or  \cite{GHN2026}. 
 More precisely,
 we first fix $T>0$ arbitrarily large and consider positive and bounded initial data $f_0^n(x,v) = f_0(x,v)+ \bar{C}\frac{1}{n}\exp(-\frac{1}{2}\vert x \vert ^2-\frac{1}{2}\vert v \vert ^2)$, for a positive constant $\bar{C}$ to be chosen later. Second, we construct a sequence of approximations $\{f^{n,k}\}_{k \in \N}$ over $[0,T]$ by 
 \begin{align*}
 	&f^{n,0} = 0,\\
 	&f^{n,k+1}(t,x,v)= f^n_0(x-tv,v) \\
 	&+ \int_{0}^{t}\int_{\R^d}\int_{\omega \in S_+^{d-1}} f^{n,k}(t-s,x-(t-s)v,v^{\ast})f^{n,k}(t-s,x-(t-s)v,u^{\ast})B(v-u,\omega)\d \omega \d u\d s\\
 	&- \int_{0}^{t}\int_{\R^d}\int_{\omega \in S_+^{d-1}}f^{n,k}(t-s,x-(t-s)v,v)f^{n,k}(t-s,x-(t-s)v,u)B(v-u,\omega)\d \omega \d u\d s. 
 \end{align*} Since $f_0^n$ is strictly positive and the gain term is nonnegative, following the argument of \cite{GHN2026} we first show, that up to some small time $\bar{T}$ the function $f^n$ is bounded from below by $\bar{C}\frac{1}{n}\exp(-CT-\frac{1}{2}\vert x \vert^2 - \frac{1}{2}\vert v \vert ^2)$. Second, iterating this argument finitely many times removes the restriction on $\bar T$ and yields nonnegativity on $[0,T]$.
  Finally, letting $n\to\infty$ and $T \to \infty$  we conclude that the limit solution $f$ is nonnegative.
  \end{proof}

 \begin{thm}
 	Let $s \geq  d-1$. Suppose $\norm{f^0}_{\mathcal{X}^s} < \varepsilon_0$ and $\norm{\tilde{f}^0}_{\mathcal{X}^s} < \varepsilon_0.$ Then, the corresponding solutions $f,\tilde{f}$ satisfy
 	\begin{align*}
 		\sup_t \norm{f-\tilde{f}}_{\mathcal{X}^s} \leq 2\norm{f^0-\tilde{f^0}}_{\mathcal{X}^s}.
 	\end{align*}
 \end{thm}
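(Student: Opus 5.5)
The plan is to run the contraction estimate from the proof of Theorem \ref{thm_glob_well_posedness} once more, this time on the difference $f-\tilde f$ of two solutions. Both $f$ and $\tilde f$ lie in the ball $B_R$ with $R=\frac{1}{4C}$. The first step is to control the $\mathcal{X}^s$-norm at each fixed time by the $\mathcal{N}^s$-norm. For $w\in\mathcal{N}^s$, Duhamel's formula gives
\begin{align*}
w(t,x,v)=w(0,x-tv,v)+\int_0^t (Aw)(\sigma,x-(t-\sigma)v,v)\,\d \sigma .
\end{align*}
The map $\varphi(x,v)\mapsto\varphi(x-tv,v)$ is not an isometry of $\mathcal{X}^s$ for each $v$ separately. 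It is, however, an isometry in the form we need, because the $x$-derivatives and the Littlewood--Paley projections commute with translation in $x$ for each fixed $v$. Then $L^1_x$ is translation invariant, and the $L_v^1$ norm is taken outside. In the Besov case one should keep the order $L^1_vL^1_x$ in the norm, which is what the paper's estimates in Lemma \ref{lem: bilinear_free} implicitly use. We therefore obtain $\sup_t\norm{w(t)}_{\mathcal{X}^s}\le \norm{w(0)}_{\mathcal{X}^s}+\norm{Aw}_{L^1_t\mathcal{X}^s}=\norm{w}_{\mathcal{N}^s}$.

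The second step applies this with $w=f-\tilde f$. We have $w(0)=f^0-\tilde f^0$ and $Aw=Q(f,f)-Q(\tilde f,\tilde f)$. As in the contraction step, and using symmetry of $Q$ in its two arguments, this equals $\frac12\big(Q(f+\tilde f,f-\tilde f)+Q(f-\tilde f,f+\tilde f)\big)$. Lemma \ref{lem: bilinear} then gives
\begin{align*}
\norm{f-\tilde f}_{\mathcal{N}^s}\le \norm{f^0-\tilde f^0}_{\mathcal{X}^s}+C\big(\norm{f}_{\mathcal{N}^s}+\norm{\tilde f}_{\mathcal{N}^s}\big)\norm{f-\tilde f}_{\mathcal{N}^s}\le \norm{f^0-\tilde f^0}_{\mathcal{X}^s}+2CR\,\norm{f-\tilde f}_{\mathcal{N}^s}.
\end{align*}
Since $2CR=\frac12$ and $\norm{f-\tilde f}_{\mathcal{N}^s}<\infty$, absorbing the last term yields $\norm{f-\tilde f}_{\mathcal{N}^s}\le 2\norm{f^0-\tilde f^0}_{\mathcal{X}^s}$. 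Combining this with the first step gives the claim.

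The main obstacle is the symmetrisation. $Q$ is not symmetric in general, so the identity $Q(h_1,h_1)-Q(h_2,h_2)=Q(h_1+h_2,h_1-h_2)$ used earlier holds only for the symmetrised operator. I would handle it directly instead:
\begin{align*}
Q(f,f)-Q(\tilde f,\tilde f)=Q(f,f-\tilde f)+Q(f-\tilde f,\tilde f).
\end{align*}
Lemma \ref{lem: bilinear} bounds the two terms separately and yields the same factor $C(\norm{f}_{\mathcal{N}^s}+\norm{\tilde f}_{\mathcal{N}^s})$. A secondary point is to confirm that both solutions are the fixed points in $B_R$, so that the bound $\norm{f}_{\mathcal{N}^s},\norm{\tilde f}_{\mathcal{N}^s}\le R$ is available; this holds by construction.
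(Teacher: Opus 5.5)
Your proposal is correct and follows the paper's proof. You apply the contraction estimate to $f-\tilde f$, using $\norm{f}_{\mathcal{N}^s},\norm{\tilde f}_{\mathcal{N}^s}\le R$ and $2CR=\frac12$, to get $\norm{f-\tilde f}_{\mathcal{N}^s}\le 2\norm{f^0-\tilde f^0}_{\mathcal{X}^s}$, and then use Duhamel's formula to bound $\sup_t\norm{(f-\tilde f)(t)}_{\mathcal{X}^s}$ by the $\mathcal{N}^s$-norm. Two points you spell out are only implicit in the paper: the check that $\varphi(x,v)\mapsto\varphi(x-tv,v)$ preserves the $\mathcal{X}^s$-norm, and the non-symmetric splitting $Q(f,f)-Q(\tilde f,\tilde f)=Q(f,f-\tilde f)+Q(f-\tilde f,\tilde f)$, which replaces the identity $Q(h_1,h_1)-Q(h_2,h_2)=Q(h_1+h_2,h_1-h_2)$ that does require symmetry. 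Your averaged identity with the factor $\frac12$ in fact holds for every bilinear $Q$, so it needs no symmetry either.
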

 \begin{proof}
Arguing as in the contraction argument in the proof of Theorem \ref{thm_glob_well_posedness} gives 
 \begin{align*}
 	\norm{f-\tilde{f}}_{\mathcal{N}^s} 
 	 &\leq \norm{f^0 - \tilde{f^0}}_{\mathcal{X}^s} + C \left(\norm{f}_{\mathcal{N}^s}+\norm{\tilde{f}}_{\mathcal{N}^s}\right)\norm{f-\tilde{f}}_{\mathcal{N}^s}\\
 	 &\leq  \norm{f^0 - \tilde{f^0}}_{\mathcal{X}^s} + \frac{2}{4}\norm{f-\tilde{f}}_{\mathcal{N}^s}, 
 \end{align*}
 which implies 	$\norm{f-\tilde{f}}_{\mathcal{N}^s} \leq 2\norm{f^0 -\tilde{f^0}}_{\mathcal{X}^s}$.
 By Duhamel's formula and the definition of $\mathcal{N}^s$ we obtain
 \begin{align*}
 	\sup_t \norm{f-\tilde{f}}_{\mathcal{X}^s} & \leq \sup_t \left(\norm{f^0-\tilde{f^0}}_{\mathcal{X}^s} + \norm{\int_0^t Q(f,f)-Q(\tilde{f},\tilde{f})\d u}_{\mathcal{X}^s} \right) \\
 	&= \sup_t \norm{f-\tilde{f}}_{\mathcal{N}^s}\\
 	&\leq 2\norm{f^0 - \tilde{f^0}}_{\mathcal{X}^s}. 
 \end{align*}
 \end{proof}

\section{Acknowledgements}
Funded by the Deutsche Forschungsgemeinschaft (DFG, German Research Foundation), Project-ID 317210226, SFB 1283.

\bibliographystyle{alpha}
\bibliography{Literatur}

\begin{thebibliography}{HLPZ26}

\bibitem[AF03]{AF2003}
Robert~A. Adams and John J.~F. Fournier.
\newblock {\em Sobolev spaces}, volume 140 of {\em Pure and Applied Mathematics
  (Amsterdam)}.
\newblock Elsevier/Academic Press, Amsterdam, second edition, 2003.

\bibitem[Ars11]{A2011}
Diogo Ars\'enio.
\newblock On the global existence of mild solutions to the {B}oltzmann equation
  for small data in {$L^D$}.
\newblock {\em Comm. Math. Phys.}, 302(2):453--476, 2011.

\bibitem[BCP97]{BCP1997}
Dario Benedetto, Emanuele Caglioti, and Mario Pulvirenti.
\newblock A one dimensional {B}oltzmann equation with inelastic collisions.
\newblock {\em Rend. Sem. Mat. Fis. Milano}, 67:169--179, 1997.

\bibitem[Bou98]{B1988}
J.~Bourgain.
\newblock Refinements of {S}trichartz' inequality and applications to
  {$2$}d-{NLS} with critical nonlinearity.
\newblock {\em Internat. Math. Res. Notices}, (5):253--283, 1998.

\bibitem[BP01]{BP2001}
Dario Benedetto and Mario Pulvirenti.
\newblock On the one-dimensional {B}oltzmann equation for granular flows.
\newblock {\em M2AN Math. Model. Numer. Anal.}, 35(5):899--905, 2001.

\bibitem[CDP21]{CDP2021}
Thomas Chen, Ryan Denlinger, and Nataša Pavlovi\'c.
\newblock Small data global well-posedness for a {B}oltzmann equation via
  bilinear spacetime estimates.
\newblock {\em Arch. Ration. Mech. Anal.}, 240(1):327--381, 2021.

\bibitem[Cer88a]{C1988}
Carlo Cercignani.
\newblock {\em The {B}oltzmann equation and its applications}, volume~67 of
  {\em Applied Mathematical Sciences}.
\newblock Springer-Verlag, New York, 1988.

\bibitem[Cer88b]{CC1988}
Carlo Cercignani.
\newblock Small data existence for the {E}nskog equation in {$L^1$}.
\newblock {\em J. Statist. Phys.}, 51(1-2):291--297, 1988.

\bibitem[CH18]{CH2018}
Timothy Candy and Sebastian Herr.
\newblock Transference of bilinear restriction estimates to quadratic variation
  norms and the {D}irac-{K}lein-{G}ordon system.
\newblock {\em Anal. PDE}, 11(5):1171--1240, 2018.

\bibitem[CSZ23]{CSZ2023}
Xuwen Chen, Shunlin Shen, and Zhifei Zhang.
\newblock Sharp global well-posedness and scattering of the {B}oltzmann
  equation.
\newblock {\em https://arxiv.org/abs/2311.02008v1}, 2023.

\bibitem[CSZ24]{CSZ2024}
Xuwen Chen, Shunlin Shen, and Zhifei Zhang.
\newblock Well/ill-posedness of the {B}oltzmann equation with soft potential.
\newblock {\em Comm. Math. Phys.}, 405(12):Paper No. 283, 51, 2024.

\bibitem[DL89]{DL1989b}
Ronald~J. DiPerna and Pierre-Louis Lions.
\newblock On the {C}auchy problem for {B}oltzmann equations: global existence
  and weak stability.
\newblock {\em Ann. of Math. (2)}, 130(2):321--366, 1989.

\bibitem[DLX16]{DLX2016}
Renjun Duan, Shuangqian Liu, and Jiang Xu.
\newblock Global well-posedness in spatially critical {B}esov space for the
  {B}oltzmann equation.
\newblock {\em Arch. Ration. Mech. Anal.}, 220(2):711--745, 2016.

\bibitem[FK00]{KF2000}
Damiano Foschi and Sergiu Klainerman.
\newblock Bilinear space-time estimates for homogeneous wave equations.
\newblock {\em Ann. Sci. \'Ecole Norm. Sup. (4)}, 33(2):211--274, 2000.

\bibitem[GHN26]{GHN2026}
Benjamin Gess, Sebastian Herr, and Anne Niesdroy.
\newblock Existence of martingale solutions to a stochastic kinetic model of
  chemotaxis.
\newblock {\em NoDEA Nonlinear Differential Equations Appl.}, 33(2):Paper No.
  52, 44, 2026.

\bibitem[Ha03]{H2003}
Seung-Yeal Ha.
\newblock {$L^1$} stability estimate for a one-dimensional {B}oltzmann equation
  with inelastic collisions.
\newblock {\em J. Differential Equations}, 190(2):621--642, 2003.

\bibitem[HJ17]{HJ2017}
Ling-Bing He and Jin-Cheng Jiang.
\newblock Well-posedness and scattering for the {B}oltzmann equations: soft
  potential with cut-off.
\newblock {\em J. Stat. Phys.}, 168(2):470--481, 2017.

\bibitem[HJ23]{HJ2023}
Ling-Bing He and Jin-Cheng Jiang.
\newblock On the {C}auchy problem for the cutoff {B}oltzmann equation with
  small initial data.
\newblock {\em J. Stat. Phys.}, 190(3):Paper No. 52, 25, 2023.

\bibitem[HJKL24]{HJKL2024}
Ling-Bing He, Jin-Cheng Jiang, Hung-Wen Kuo, and Meng-Hao Liang.
\newblock The {$L^p$} estimate for the gain term of the {B}oltzmann collision
  operator and its application.
\newblock {\em Arch. Ration. Mech. Anal.}, 248(6):Paper No. 112, 55, 2024.

\bibitem[HLPZ26]{HLZ2026}
Xinfeng Hu, Shuangqian Liu, Haoran Peng, and Yis Zhou.
\newblock Low-regularity global well-posedness for the {B}oltzmann equation
  near vacuum.
\newblock {\em Arxiv-preprint}, 2026.

\bibitem[IS84]{IS1984}
Reinhard Illner and Marvin Shinbrot.
\newblock The {B}oltzmann equation: global existence for a rare gas in an
  infinite vacuum.
\newblock {\em Comm. Math. Phys.}, 95(2):217--226, 1984.

\bibitem[KM93]{KM1993}
S.~Klainerman and M.~Machedon.
\newblock Space-time estimates for null forms and the local existence theorem.
\newblock {\em Comm. Pure Appl. Math.}, 46(9):1221--1268, 1993.

\bibitem[Tao07]{T2007}
Terence Tao.
\newblock Scattering for the quartic generalised {K}orteweg-de {V}ries
  equation.
\newblock {\em J. Differential Equations}, 232(2):623--651, 2007.

\bibitem[Tri78]{T1978}
Hans Triebel.
\newblock {\em Interpolation theory, function spaces, differential operators}.
\newblock VEB Deutscher Verlag der Wissenschaften, Berlin, 1978.

\bibitem[Vil02]{V2002}
C\'edric Villani.
\newblock A review of mathematical topics in collisional kinetic theory.
\newblock In {\em Handbook of mathematical fluid dynamics, {V}ol. {I}}, pages
  71--305. North-Holland, Amsterdam, 2002.

\end{thebibliography}

\begin{flushleft}
	\small \normalfont
	\textsc{Anne Niesdroy\\
		Fakult\"at f\"ur Mathematik, Universit\"at Bielefeld\\
		33615 Bielefeld, Germany.}\\
	\texttt{\textbf{aniesdroy@math.uni-bielefeld.de}}
\end{flushleft}
\end{document}